\theoremstyle{plain}
\declaretheorem[name={Theorem},numberwithin=section]{thm}
\declaretheorem[name={Proposition},sibling=thm]{proposition}
\declaretheorem[name={Claim}, numberwithin=thm]{claim}
\declaretheorem[name={Corollary},sibling=thm]{corollary}
\declaretheorem[name={Lemma},sibling=thm]{lemma}
\newcommand{\dA}{\mathbf{\mathcal{A}}}
\newcommand{\dB}{\mathbf{\mathcal{B}}}
\newcommand{\R}{\mathcal{R}}
\newcommand{\A}{\mathcal{A}}
\newcommand{\C}{\mathcal{C}}
\theoremstyle{definition}
\newtheorem{definition}{Definition}[section]
\newtheorem*{remark}{Convention}
\theoremstyle{plain}
\theoremstyle{plain}
\theoremstyle{definition}
\theoremstyle{plain}
\newtheorem*{lemma*}{Lemma}
\newcommand{\set}[1]{\{{#1}\}}
\DeclareMathOperator{\range}{range}
\DeclareMathOperator{\Spec}{DgSp}
\DeclareMathOperator{\Id}{Id}
\title[Reductions between equivalence relations]{Measuring the complexity of reductions\\ between equivalence relations}
\author[Fokina]{Ekaterina Fokina}
\address{Institute of Discrete Mathematics and Geometry, Vienna University of Technology, Austria}
\email{ekaterina.fokina@tuwien.ac.at}
\urladdr{dmg.tuwien.ac.at/fokina}
\author[Rossegger]{Dino Rossegger}
\address{Institute of Discrete Mathematics and Geometry, Vienna University of Technology, Austria}
\email{dino.rossegger@tuwien.ac.at}
\urladdr{dmg.tuwien.ac.at/rossegger}
\author[San Mauro]{Luca San Mauro}
\address{Institute of Discrete Mathematics and Geometry, Vienna University of Technology, Austria}
\email{luca.san.mauro@tuwien.ac.at}
\urladdr{dmg.tuwien.ac.at/sanmauro}
\thanks{The authors were supported by the Austrian Science Fund FWF, project~P~27527. San Mauro was also partially supported by the Austrian Science Fund FWF, project~M~2461.}
\keywords{Computable structure theory, computable reducibility, computably enumerable equivalence relations, reducibibility spectra}
\begin{document}

\maketitle

\begin{abstract}
Computable reducibility is a well-established notion that allows to compare the complexity of various equivalence relations over the natural numbers. We generalize computable reducibility by introducing degree spectra of reducibility and bi-reducibility. These spectra provide a natural way of measuring the complexity of reductions between equivalence relations.
We prove that any upward closed collection of Turing degrees with a countable basis can be realised as a reducibility spectrum or as a bi-reducibility spectrum. We show also that there is a reducibility spectrum of computably enumerable equivalence relations with no countable basis and a reducibility spectrum of computably enumerable equivalence relations which is downward dense, thus has no basis.
\end{abstract}

\section{Introduction}
 Computable reducibility is a long-standing notion that has proven to be  fruitful
for ranking the complexity
of equivalence relations over the set $\omega$ of natural numbers. The following definition is the relativised version of the one commonly considered in the literature.

\begin{definition}
Let $R$ and $S$ be two equivalence relations on $\omega$, and let $\mathbf{d}$ be a Turing degree. $R$ is \emph{$\mathbf{d}$-computably reducible} to $S$ (notation: $R \leq_{\mathbf{d}} S$), if there is a $\mathbf{d}$-computable function $f$ such that, for all natural numbers $x,y$, the following holds
\[
x R y \Leftrightarrow f(x)Sf(y).
\]
If $R \leq_{\mathbf{d}} S$ and $S \leq_{\mathbf{d}} R$, we write $R\equiv_{\mathbf{d}} S$, and we say that $R$ and $S$ are \emph{bi-reducible by $\mathbf{d}$}.
\end{definition}
The case $\mathbf{d}=\mathbf{0}$ has been thoroughly explored. The standard underlying intuition is that, if $R\leq_{\mathbf{0}} S$ via some $f$, then $S$ is at least as complex as $R$,
since all that is needed  to decide whether $x$ and $y$ are $R$-equivalent is to know if $f(x)$ and $f(y)$ are $S$-equivalent.
A main benefit of computable reducibility is that it provides a single formal setting for classifying countable equivalence relations, even if they arise from very different contexts.
For instance, Miller III~\cite{miller1971group} showed that there is a finitely presented group such that all computably enumerable equivalence relations are computably reducible to its word problem.
As another example --- this one concerning relations that are not even hyperarithmetical ---
Fokina, S.~Friedman, Harizanov, Knight, McCoy, and Montalb\'an~\cite{Fokina:12b} proved that the isomorphism relations on several classes of computable structures (e.g., graphs, trees,  torsion abelian groups,   fields  of  characteristic $0$ or  $p$, linear orderings) is complete  among $\Sigma^1_1$ equivalence relations.

More generally, researchers studied computable reducibility for decades, and approached it from several different perspectives, often unveiling significant connections with other fields, such as descriptive set theory and proof theory.
 Ershov~\cite{Ershov:77} initiated this research program
 in a category-theoretic fashion, while dealing with the theory of numberings
 (see~\cite{Ershov:survey} for a survey in English). Following Ershov, one can define the category of equivalence
relations on $\omega$, in which a morphism from $R$ to $S$ is a function $\mu : \omega/{_R} \rightarrow \omega/{_S}$ such that there is a computable
function $f$ with $\mu([x]_{R}) = [f(x)]_{S}$. So morphisms are induced by computable functions $f$ such that $x R y \Rightarrow f(x) S f(y)$, and thus $R\leq S$ holds if and only if there is monomorphism from $R$ to $S$.

Scholars continued Ershov's work by pursuing different goals, such as studying provable equivalence of formal systems (see, e.g.,~\cite{Visser:80,Montagna:82,bernardi1981,Bernardi:83}). This proof-theoretic motivation explains why they focused on the $\Sigma^0_1$ case: the set of theorems of any computably axiomatizable theory is obviously a computably enumerable set. In the Russian literature, c.e.\  equivalence relations are often called \emph{positive}, but
 as in~\cite{Gao:01} and many other contributions, we adopt the acronym \emph{ceers} for referring to them.
The interested reader can see Andrews, Badaev, and Sorbi~\cite{andrews2017survey} for a nice and up-to-date survey on ceers, with a special focus on \emph{universal} ceers, i.e., ceers to which all others can be computably reduced.

Computable reducibility shall also be regarded as the computable analogue of \emph{Borel reducibility}, a central object of study of modern descriptive set theory.
Introduced by H.~Friedman and Stanley~\cite{friedman1989borel}, the notion of Borel reducibility allows to compare the complexity of equivalence relations on Polish spaces, such as the Cantor space $2^\omega$ (see~\cite{gao2008invariant,kanoveui2008borel}). This is particularly meaningful for calculating the complexity of different classification problems, i.e., problems associated to the task of characterizing some collection of mathematical objects  in terms of invariants  (up to isomorphism, or some other nice equivalence relation expressing structural resemblance). Calvert, Cummins, Knight, and S.~Miller~\cite{calvert2004comparing} introduced an effective version of this study, by considering effective transformations between classes of structures. Another possible approach is that of regarding computable reducibility itself as representing a computable counterpart of Borel reducibility, where the former naturally applies to equivalence relations with domain $\omega$ and the latter refers to equivalence relations on $2^\omega$ (or similar spaces). This interpretation appears, e.g., in~\cite{Gao:01,Coskey:12,fokina2010effective,miller2016finitary}. In particular, Coskey, Hamkins, and R.~Miller~\cite{Coskey:12} investigated equivalence relations on (indices of) c.e.\ sets --- or, of families of c.e.\ sets --- that mirror  classical combinatorial equivalence relations of crucial importance for Borel theory.

\subsection{Reducibility and bi-reducibility spectra}
Our motivating question is the following:
\emph{given two arbitrary equivalence relations $R$ and $S$, how much information is needed to compute  possible ways of reducing $R$ to $S$?}
 As our main tool, we introduce the following spectra of Turing degrees, that stand in analogy with many similar notions from computable structure theory.

\begin{definition}
Let $(R, S)$ be a pair of equivalence relations. The \emph{degree spectrum of reducibility} of $(R,S)$ (or, the \emph{reducibility spectrum} of $(R,S)$) is the following set of Turing degrees
\[
\Spec_{\Rightarrow}(R, S)=\set{\mathbf{d} \mbox{  $|$ $R \leq_{\mathbf{d}} S$} }.
\]

Similarly, we define the \emph{degree spectrum of bi-reducibility} of $(R,S)$ (or, the \emph{bi-reducibility spectrum} of $(R,S)$) as
\[
\Spec_{\Leftrightarrow}(R, S)=\set{\mathbf{d} \mbox{ $|$} R\equiv_\mathbf{d} S}.
\]
\end{definition}

\begin{definition}
The \emph{degree of reducibility} of $(R,S)$ is the least degree of $\Spec_{\Rightarrow}(R, S)$ if any such degree exists. The \emph{degree of bi-reducibility} of  $(R,S)$ is defined similarly.
\end{definition}

Different degree spectra have been considered in the literature. The \emph{isomorphism spectrum} of a structure $\mathcal{A}$ (in symbols: $\Spec_{\cong}(\A)$) is classically defined as the collection of all Turing degrees that compute a copy of $\A$ and is the most common way to measure the computational complexity of $\A$.
Isomorphism spectra have been widely investigated (see, e.g.,~\cite{knight_degrees_1986, kalimullin2008almost, andrews2016complements, harizanov2007spectra}; they are also called ``degree spectra'' or ``spectra of structures'':
our terminology emphasizes the difference with the other spectra discussed below).  In recent years researchers considered also alternative degree spectra, such as \emph{theory spectra}~\cite{andrews_spectra_2015}, \emph{$\Sigma_n$-spectra}~\cite{fokina_degree_2016}, \emph{bi-embeddability spectra}~\cite{fokina_bi-embeddability_2016}, and \emph{elementary bi-embeddability spectra}~\cite{rossegger_elementary_2018}.
The notion of \emph{computable categoricity} gives rise to the \emph{categoricity spectrum}, that measures how difficult it is to compute isomorphisms between computable copies of a given structure (see~\cite{fokina2010degrees}, where the notion was introduced). Our perspective is to some extent close to the latter spectra, since we similarly fix the structures involved and then analyse the information needed to witness a possible reduction between them.

The main problem when dealing with a given class of spectra is that of characterizing which kind of information they can encode, in terms of the classes of Turing degrees that can be realised by them. Even if $\A$ is a familiar structure, $\Spec_{\cong}(\A)$ (or some other possible spectrum of $\A$) might be very complicated: for example, the isomorphism spectrum of a linear order $L$ is a \emph{cone} (i.e., $\Spec_{\cong}(L)=\set{\mathbf{d}: \mathbf{c}\leq \mathbf{d}}$, for some $\mathbf{c}$) if and only if $L$ is computably presentable, as proved by Richter~\cite{richter_degrees_1981}.

\medskip

In the present paper we aim to compare reducibility and bi-reducibility spectra with other spectra considered in the literature.

\begin{definition}
Let $\dA$ be a set of Turing degrees. A set of Turing degrees $\dB$ is a \emph{basis} of $\dA$ if
\begin{enumerate}
\item all elements of $\dB$ are Turing-incomparable,
\item and $\dA=\set{\mathbf{d}: (\exists \mathbf{b} \in \dB)(\mathbf{b}\leq \mathbf{d})}$.
\end{enumerate}
\end{definition}

 In Section $2$ we prove that any upward closed set of Turing degrees with a countable basis can be realised as a reducibility spectrum. In Section $3$ we show that there is a reducibility spectrum with no countable basis, while in Section $4$ we show that there is a reducibility spectrum having no basis at all. In Section $5$ we partially extend these results to the case of bi-reducibility spectra.

\subsection{Notation and terminology} For all $X\subseteq \omega$, we denote $\omega \smallsetminus X$ by $\overline{X}$. All our equivalence relations have domain $\omega$. We denote by $[x]_R$ the $R$-equivalence class of any given $x$. We denote by $c_R$ the number of equivalence classes of $R$.
An equivalence relation
$R$ is $n$-\emph{bounded}
if all its equivalence classes have size at most $n$; $R$ is \emph{bounded} if it is $n$-bounded for some natural number $n$. The following basic equivalence relations will appear many times:

\begin{itemize}
\item $\texttt{Id$_1$}$ is the equivalence relation consisting of just one equivalence class, i.e., $x\texttt{Id$_1$} y$, for all $x,y$;
\item $\texttt{Id}$ is the equivalence relation consisting of all singletons, i.e., $x\texttt{Id} y$ if and only if $x=y$.
\end{itemize}

Our computability theoretic notions are standard, and as in~\cite{Soare:Book}. In particular, recall how the principal function of a given infinite set $A$ is defined.
Let $a_0<a_1<\ldots$ be the ascending sequence of all elements of $A$.
\emph{The principal function} of $A$ is the following injective function: $p_A(x)=a_x$.
It is immediate that $p_A$ is computable in $A$.

\section{Classes of Turing degrees realised by reducibility spectra}
For the sake of exposition, we begin by focusing on reducibility spectra. The discussion about bi-reducibility spectra is postponed to the last section.

It is easy to see from the definition of reducibility spectra that they are either upwards closed or empty.

\begin{proposition}\label{fact:upward-density}
Let $(R,S)$ be any pair of equivalence relations. Then $\Spec_{\Rightarrow}(R, S)$ is either empty or upward closed and, if empty, then  $c_R > c_S$.
\end{proposition}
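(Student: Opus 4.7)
The plan is to split the proposition into its two claims (upward closure when the spectrum is non-empty, and the cardinality obstruction when it is empty) and dispatch each directly from the definitions, with no significant obstacle in either case.

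For upward closure I would simply observe that if $\mathbf{d}\in\Spec_{\Rightarrow}(R,S)$ is witnessed by some $\mathbf{d}$-computable reduction $f$ and $\mathbf{d}\leq\mathbf{e}$, then the \emph{same} $f$ is $\mathbf{e}$-computable and hence witnesses $\mathbf{e}\in\Spec_{\Rightarrow}(R,S)$. This requires nothing beyond the monotonicity of relative computability in the oracle.

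For the second claim my plan is to prove the contrapositive: if $c_R\leq c_S$ then $\Spec_{\Rightarrow}(R,S)$ is non-empty. Since $c_R,c_S\in\{0,1,2,\dots,\aleph_0\}$, the hypothesis $c_R\leq c_S$ yields an injection $\iota:\omega/R\to\omega/S$; for every $R$-class $C$ I would then pick a representative $s_C\in\iota(C)$ and define $f:\omega\to\omega$ by $f(x)=s_{[x]_R}$. If $xRy$, then $f(x)=f(y)$ and so $f(x)Sf(y)$ by reflexivity; if $\neg(xRy)$, then $[x]_R\neq[y]_R$, and since $\iota$ is injective $f(x)$ and $f(y)$ lie in distinct $S$-classes, so $\neg(f(x)Sf(y))$. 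Hence $f$ reduces $R$ to $S$, and the Turing degree of (the graph of) $f$ lies in $\Spec_{\Rightarrow}(R,S)$.

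The only point to watch is that no computability-theoretic strength has to be built into $\iota$ or into the choice of representatives: the reduction $f$ is required only to exist as a function, and the spectrum automatically collects every Turing degree above $\deg(f)$ by the first part. Thus the proposition amounts to a definitional sanity check, and I do not anticipate any genuinely hard step.
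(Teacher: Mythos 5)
Your proof is correct, and it takes a slightly different route than the paper's. For the nonemptiness direction, the paper builds a reduction $f$ by an explicit greedy recursion and notes that this particular $f$ is computable from $R\oplus S$, so that $\deg(R\oplus S)\in\Spec_{\Rightarrow}(R,S)$; you instead produce an abstract reduction from the injection $\iota:\omega/R\to\omega/S$ and a choice of representatives, and observe that the degree of whatever function you get lies in the spectrum. Both establish the stated proposition, but the paper's construction buys a concrete upper bound on where the spectrum begins, while yours establishes only bare nonemptiness. Your upward-closure argument coincides with the paper's. One small stylistic remark: the paper's recursive definition is in effect doing the same cardinality bookkeeping as your injection $\iota$ (the phrase ``any fresh equivalence class of $R$ can be mapped into a fresh equivalence class of $S$'' is exactly that), just packaged so that the resulting $f$ is uniformly $(R\oplus S)$-computable, which is the more useful form of the statement for later results.
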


\begin{proof}
If $c_R >c_S$, then obviously there can be no reduction from $R$ into $S$, since any function would necessarily map distinct equivalence classes of $R$ into one single class of $S$.

Next, assume $c_R \leq c_S$.
Let $\mathbf{a}=\deg(R\oplus S)$, and define $f$ to be the $\mathbf{a}$-computable function such that $f(0)=0$ and

\[ f(x+1)=
\begin{cases}
f(y) &\text{ $(\exists y\leq x)(y\in [x+1]_R)$}\\
\mu z[(\forall y \leq x)(z\notin[f(y)]_S)] &\text{ otherwise}.
\end{cases}
\]

The hypothesis that $c_R \leq c_S$ holds guarantees that
any fresh equivalence class of $R$ can be mapped into a fresh  equivalence class of $S$. Thus, $f$ is well-defined and $R\leq_{\mathbf{a}}S$ via $f$. This proves that  $\Spec_{\Rightarrow}(R, S)$ is not empty.
For the upward closure just notice that if $R$ $\mathbf{a}$-computably reduces to $S$, then the reduction holds also for any $\mathbf{d}$ such that $\mathbf{d}\geq \mathbf{a}$.
\end{proof}

\begin{remark}
To avoid empty spectra, in what follows we assume that all our equivalence relations have infinitely many equivalence classes.
\end{remark}

\smallskip

Although our analysis of reducibility spectra will focus mainly on positive results, we begin with a negative one: standard set-theoretic considerations show that reducibility spectra do not coincide with the class of all upward closed collections of Turing degrees.

\begin{proposition}
There exists an upward closed collection of Turing degrees that can not be realised as a reducibility spectrum.
\end{proposition}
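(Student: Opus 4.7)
My plan is to establish this by a straightforward cardinality argument, which I believe is what the authors intend when they say ``standard set-theoretic considerations.'' The idea is to compare the cardinality of the class of all reducibility spectra with that of the class of all upward closed collections of Turing degrees, and observe that the former is strictly smaller.

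First I would bound the number of reducibility spectra from above. Since every equivalence relation on $\omega$ is a subset of $\omega \times \omega$, there are at most $2^{\aleph_0}$ equivalence relations on $\omega$, hence at most $2^{\aleph_0}$ pairs $(R,S)$. The map $(R,S) \mapsto \Spec_{\Rightarrow}(R,S)$ therefore has range of cardinality at most $2^{\aleph_0}$.

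Next I would bound the number of upward closed collections of Turing degrees from below by $2^{2^{\aleph_0}}$. For this, I invoke the classical fact that the Turing degrees contain an antichain $\dA$ of size $2^{\aleph_0}$ (for example, a set of mutually $1$-generic degrees, or simply $2^{\aleph_0}$ pairwise incomparable degrees obtained by the usual forcing/diagonalization construction). For each subset $X \subseteq \dA$, let
\[
U(X) = \set{\mathbf{d} : (\exists \mathbf{a} \in X)(\mathbf{a} \leq \mathbf{d})}.
\]
Each $U(X)$ is upward closed, and distinct subsets of $\dA$ yield distinct upward closures: indeed, since $\dA$ is an antichain, one has $\mathbf{a} \in U(X) \cap \dA$ if and only if $\mathbf{a} \in X$, so $U(X) \cap \dA = X$ and $X \mapsto U(X)$ is injective. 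Hence there are at least $2^{|\dA|} = 2^{2^{\aleph_0}}$ distinct upward closed collections of Turing degrees.

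Combining the two counts, since $2^{\aleph_0} < 2^{2^{\aleph_0}}$ by Cantor's theorem, there must exist an upward closed collection of Turing degrees that is not of the form $\Spec_{\Rightarrow}(R,S)$ for any pair $(R,S)$. The only nontrivial ingredient is the existence of a continuum-sized antichain of Turing degrees, and even this could be bypassed, if desired, by working directly with the fact that $|2^{\mathcal{D}}| = 2^{2^{\aleph_0}}$ and noting that restricting to upward closed sets still leaves $2^{2^{\aleph_0}}$ of them (this is a minor obstacle, but either formulation works). No further structural analysis of reducibility spectra is needed for this particular negative result.
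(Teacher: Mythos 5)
Your proof is correct and takes essentially the same approach as the paper: bound the number of spectra by $2^{\aleph_0}$, then exhibit $2^{2^{\aleph_0}}$ upward closed collections by taking upward closures of subsets of a continuum-sized antichain of Turing degrees. The only cosmetic difference is that the paper uses the antichain of minimal degrees specifically, while you allow any continuum-sized antichain.
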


\begin{proof}
On the one hand, there are $2^{\aleph_0}$ many reducibility spectra. This follows immediately from the fact that any such  spectrum is associated with a pair of equivalence relations with domain $\omega$. On the other hand, there are $2^{2^{\aleph_0}}$ many upward closed collections of Turing degrees. To see this, recall that there are $2^{\aleph_0}$ minimal degrees with respect to Turing reducibility and notice that any of set of minimal degrees form the basis of an upward closed collection of degrees.
\end{proof}

Proposition~\ref{fact:upward-density} implies
that any pair $(R,S)$ has degree of reducibility
if and only if its reducibility spectrum
is a cone. The next result says that, for any Turing degree~$\mathbf{d}$, there is a pair of equivalence relations $(R,S)$ that encodes $\mathbf{d}$ as its degree of reducibility. We begin by introducing a convenient way of coding sets of numbers by equivalence relations.

\begin{definition}
Let $A_0,\ldots,A_{n-1}$  be a collection of $n$
pairwise disjoint sets of numbers.  An equivalence relation $R_{A_0,\ldots,A_{n-1}}$ is \emph{generated} by $A_0,\ldots,A_{n-1}$ if
\[
xR_{A_0,\ldots,A_{n-1}}y \Leftrightarrow x=y \vee (\exists i < n)(x,y \in A_i ).
\]
\end{definition}

This way of representing sets by equivalence relations is common in the literature (see, for instance, the characterization of \emph{set-induced} degrees of equivalence relations in Ng and Yu~\cite{ngyu}). Gao and Gerdes~\cite{Gao:01} call equivalence relations generated by $n$ sets \emph{$n$-dimensional}. We adopt their terminology when convenient.
A special reason of interest for $1$-dimensional ceers is due to the fact that the interval $[\mathbf{0_1},\mathbf{0'_1}]$ of the $1$-degrees is embeddable into the degree structure generated by computable reducibility on  ceers (computably enumerable equivalence relations). As a corollary, the first-order theory of ceers is undecidable, as is shown in~\cite{Andrews:14}.

The $1$-dimensional equivalence relations can easily encode any given Turing degree as a degree of reducibility.

\begin{proposition}\label{fact:onecone}
For any Turing degree $\mathbf{a}$, there is a pair of equivalence relations $(R,S)$, such that $\Spec_{\Rightarrow}(R,S)=\set{\mathbf{d}: \mathbf{a} \leq \mathbf{d}}$.
\end{proposition}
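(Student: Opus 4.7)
The plan is to realise $\mathbf{a}$ as a degree of reducibility by means of two $1$-dimensional equivalence relations, exploiting the rigidity that any reduction out of $R_A$ is forced to be constant on the big class $A$ and injective on the singletons coming from $\overline{A}$, so that $A$ can be read off directly from the reduction. Concretely, I would fix a set $A \subseteq \omega$ of degree $\mathbf{a}$ with $0 \in A$; this is without loss of generality, since replacing $A$ by $\{x+1 : x \in A\} \cup \{0\}$ preserves its Turing degree. I then set $R := R_A$ and $S := \texttt{Id}$.

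For the inclusion $\{\mathbf{d} : \mathbf{a} \leq \mathbf{d}\} \subseteq \Spec_{\Rightarrow}(R,S)$, I would exhibit an explicit $\mathbf{a}$-computable reduction by defining $f(x) := 0$ for $x \in A$ and $f(x) := 1 + |\{y < x : y \in \overline{A}\}|$ for $x \in \overline{A}$. A direct check shows that $f$ is constant on $A$ and strictly increasing (hence injective) on $\overline{A}$ with range inside $\omega \setminus \{0\}$, so $f$ reduces $R_A$ to $\texttt{Id}$. Proposition~\ref{fact:upward-density} then propagates membership to every degree above $\mathbf{a}$.

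Conversely, suppose $f$ is any $\mathbf{d}$-computable reduction from $R_A$ to $\texttt{Id}$. The biconditional $x R_A y \Leftrightarrow f(x) = f(y)$ forces $f$ to take a single value $c$ on all of $A$ and to be injective on $\overline{A}$ with $f(\overline{A}) \subseteq \omega \setminus \{c\}$. Because $0 \in A$, we have $c = f(0)$, so $A = \{x : f(x) = f(0)\}$ is decided by a $\mathbf{d}$-computable function, yielding $\mathbf{a} \leq \mathbf{d}$.

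The only delicate point --- and the main obstacle to a naive argument --- is the recovery of the constant $c$ from $f$. Without the normalisation $0 \in A$, the value $c$ could only be characterised as the unique element of $\omega$ attained by $f$ infinitely often, a $\Sigma^0_2$ condition on $f$ that would yield only $\deg(A) \leq \mathbf{d}'$ rather than $\deg(A) \leq \mathbf{d}$. The trivial-looking step of arranging $0 \in A$ is precisely what turns the argument into a genuine $\mathbf{d}$-computation and should be emphasised in the write-up.
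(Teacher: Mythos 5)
Your proof is correct and takes essentially the same route as the paper's: take $(R,S) = (R_A,\texttt{Id})$ for some $A \in \mathbf{a}$, exhibit an $\mathbf{a}$-computable reduction that is constant on the big class and injective off it, and conversely recover $A$ as the preimage of the constant value $c$ that any reduction must take on $A$. The forward reductions differ cosmetically --- the paper uses $h(x)=a$ for $x\in A$ and $h(x)=x$ otherwise, where $a$ is a fixed element of $A$ --- but the structure of the argument is identical.

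However, your closing paragraph misidentifies the ``delicate point.'' The normalization $0\in A$ is not needed, and the $\Sigma^0_2$ worry does not arise. The value $c$ is a single natural number, and $\deg(A)\leq\mathbf{d}$ requires only the \emph{existence} of a $\mathbf{d}$-computable procedure deciding $A$; that procedure is free to have $c$ hardcoded as a parameter, since every natural number is computable. You would need to \emph{search} for $c$ only if you were trying to produce a $\mathbf{d}$-index for $A$ uniformly in a $\mathbf{d}$-index for $f$, which is not what Turing reducibility asks for. The paper's proof proceeds without any normalization and simply notes that the existence of the appropriate $z$ makes $A$ computable from $f$. Your extra step is harmless but superfluous, and presenting it as the crux of the argument would mislead a reader about what is actually at stake.
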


\begin{proof}
Let $A\subseteq \omega$ be
co-infinite and
such that $\deg(A)=\mathbf{a}$. Let $a$ be an element of $A$. Consider $(R_A, \texttt{Id})$. We define $h$ as the following $\mathbf{a}$-computable function, for all~$x$,
\[
h(x)=\begin{cases}
a &\text{ if $x\in A$},\\
x &\text{ otherwise}.
\end{cases}
\]

We have that $h$ $\mathbf{a}$-computably reduces $R_A$ to $\texttt{Id}$.
Thus,  $\mathbf{a}\in \Spec_{\Rightarrow}(R_A,\texttt{Id})$. Now, suppose that $f$ $\mathbf{d}$-reduces $R_A$ to $\texttt{Id}$. It follows that there is some $z$ such that, for all $x$, $f(x)=z$ if and only if $x \in A$. This means that $f$ computes $A$, and therefore $\mathbf{d}\geq \mathbf{a}$. So, $\Spec_{\Rightarrow}(R_A, \texttt{Id})$ is the cone above $\mathbf{a}$.
\end{proof}

The following question naturally arises: does every pair of equivalence relations have a degree of reducibility?
We answer to this question negatively. In fact, much more can be proved.

It is a well known fact that the isomorphism spectrum of a structure can not be the union of finitely many or countably many cones of Turing degrees, (see Soskov~\cite{Soskov}). This is not true for theory spectra. Andrews and Miller~\cite{andrews_spectra_2015} showed that there is a theory $T$ such that its spectrum coincides with the union of two cones. The same holds for $\Sigma_n$-spectra, with $n\geq 2$, as
proved
by Fokina, Semukhin, and Turetsky~\cite{fokina_degree_2016}.
By the following theorem, we show that reducibility spectra can be the union of $n$ cones, for all $n$.
In fact, given any countable set of Turing degrees $\dB$, there is a reducibility spectrum that coincides with the upward closure of $\dB$. A similar result holds for bi-reducibility spectra, as discussed in the final section.

\begin{thm}\label{thm:characterization}
Any upward closed collection of Turing degrees with a countable basis can be realised as a reducibility spectrum.
\end{thm}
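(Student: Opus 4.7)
I will generalize Proposition~\ref{fact:onecone}. Fix a countable basis $\dB = \{\mathbf{b}_i : i \in \omega\}$ (if finite, repeat the last element so as to index by $\omega$). For each $i$, choose a representative $B_i$ of $\mathbf{b}_i$ with $\overline{B_i}$ infinite, and place $B_i$ on the $i$-th column of $\omega \times \omega$ via $B_i^* = \{\langle i, n\rangle : n \in B_i\}$. The reducibility spectrum of interest should be the upward closure of $\{\mathbf{b}_i\}$, so intuitively the construction must allow a reduction to go through using the information of any single $B_j$.

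A promising approach is to take $R$ as the column-wise analogue of the gadget in Proposition~\ref{fact:onecone}: on column $i$, $R$ has the single non-trivial class $B_i^*$, with the other points on that column being $R$-singletons. The relation $S$ is then designed so that the whole of $R$ can be reduced to $S$ through the structure of any one column. Concretely, for each $j$ I would define an explicit $\mathbf{b}_j$-computable reduction $f_j$ that uses $B_j$ both to identify $B_j^*$ on column $j$ of $R$, and to route each other non-trivial class $B_i^*$ (for $i \neq j$) and each $R$-singleton into distinct $S$-classes whose representatives can be extracted from $B_j$ alone (for example, via the principal function of $B_j$ or of $\overline{B_j}$ on column $j$). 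This yields $\mathbf{b}_j \in \Spec_{\Rightarrow}(R, S)$ for every $j$, and hence the upward closure of $\dB$ is contained in the spectrum.

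The harder direction is showing that every reduction $f$ with $f \leq_T \mathbf{d}$ forces $\mathbf{d} \geq \mathbf{b}_j$ for some $j$. The main obstacle here is ruling out ``distributed'' reductions in which the reducer spreads commitments across many columns so that no single column is fully decoded. To handle this, $S$ must be arranged so that any map preserving $xRy \Leftrightarrow f(x)Sf(y)$ implicitly commits to the membership status of cofinitely many positions on some single column $j$, and therefore computes $B_j$. One way to force this is to choose the $B_i$'s with strong immunity-type properties, so that any infinite $\mathbf{d}$-computable subset of $\overline{B_i}$ already recovers $B_i$ on that column; an alternative route is a priority construction that diagonalises against the $e$-th partial $\mathbf{d}$-computable function for every $\mathbf{d}$ outside the target upward closure.

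Closing this final step, and in particular showing that the distributed strategy cannot yield a valid reduction without some $B_j$ being computed in full, is the step I expect to be the most delicate. Combined with the positive direction, it gives $\Spec_{\Rightarrow}(R, S)$ equal to the upward closure of $\dB$, as required.
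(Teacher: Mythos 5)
Your framework—columns indexed by $\omega$, one reduction $f_j$ available from each $\mathbf{b}_j$, and the observation that the crux is ruling out a ``distributed'' reduction—matches the paper's broad strategy, but you have not actually closed the distributed case, and the idea that does close it is absent from your sketch. The paper's gadget is simpler and contains one extra trick that is precisely what you are missing. First, the paper takes $R = \texttt{Id}$ rather than a relation with a nontrivial class on each column; this means the target relation is a $1$-dimensional $R_{\overline{X}}$ and a reduction is just a map into the singleton part $X$ (up to one collision), which eliminates all the bookkeeping of routing the $B_i^*$-classes and singletons into distinct $S$-classes. Second—and this is the missing idea—the columns of $X$ are not indexed by $i$ directly but by $p_{B_0}(i)$: the set is $X = \{\langle p_{B_0}(i), p_{B_i}(x)\rangle : i,x \in \omega\}$ with each $B_i$ introreducible. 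Introreducibility (your ``strong immunity-type property'') handles the case where the reduction lands infinitely often in a single column $k$: one extracts an infinite set c.e.\ in $f$ inside $B_k$ and uses Lemma~\ref{lem:initial}. But introreducibility alone does nothing in the distributed case, where only finitely many points land in each column; your sketch leaves this entirely open, and vaguely gesturing at a priority argument is not a proof. The paper handles it by the indexing trick: if $f$ visits infinitely many columns, then the set of first coordinates it hits is an infinite subset of $B_0$ c.e.\ in $f$, so again Lemma~\ref{lem:initial} gives $f \geq_T B_0$, hence $\mathbf{d} \geq \mathbf{b}_0$. Without building $B_0$ into the column indices (or some equivalent device), a reducer really can spread out so thinly that it reveals nothing about any single $B_j$, and the hard inclusion $\Spec_\Rightarrow(R,S)\subseteq \dA$ fails. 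So the gap is concrete: you correctly isolated the dangerous case but did not supply the mechanism that makes the distributed reducer pay.
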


Before proving the theorem, let us recall the notion of introreducible set.

\begin{definition}[see Jockusch~\cite{Jockusch:68}]
An infinite set $A\subseteq \omega$ is \emph{introreducible} if it is computable in any of its infinite subsets, i.e., for all infinite $B\subseteq A$, we have $B\geq_T A$.
\end{definition}

It is well known that any Turing degree $\mathbf{d}$ contains an introreducible set. To prove it, from any infinite $A\in \mathbf{d}$ build a Turing equivalent set $B$ as follows: for each $\sigma\subseteq \chi_A$, put (the code of) $\sigma$ in $B$. It is not difficult to see that from any infinite subset of $B$ we can extract arbitrarily long initial segments of $\chi_A$, and thus compute $B$. A nice consequence of the fact that $B$ consists of initial segments is that, if some function $f$ enumerates an infinite subset of $B$, then $f$ computes $B$. Since we make use of the latter property several times through the rest of the paper, it is convenient to dignify it as a lemma.

\begin{lemma}\label{lem:initial}
Let $B$ be introreducible and let $f$ be a function. If there is an infinite $Y\subseteq B$ such that $Y$ is c.e.\ in $f$, then $f\geq_T B$.
\end{lemma}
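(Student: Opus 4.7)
The plan is to unwind the hypothesis into an $f$-computable infinite subset of $B$ and then apply introreducibility directly. Since $Y$ is c.e.\ in $f$, there is a procedure with oracle $f$ that enumerates $Y$. I would first extract from this enumeration an $f$-computable injective listing $y_0, y_1, y_2, \ldots$ of \emph{distinct} elements of $Y$: using oracle $f$, run the enumerator and, at each stage, output the next freshly-enumerated element. Because $Y$ is infinite, this listing is total, and its range $Y'$ is an infinite subset of $Y$ (hence of $B$) that is computable in $f$.

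Now I would invoke the introreducibility of $B$ to the infinite subset $Y' \subseteq B$: by definition, $B \leq_T Y'$. Chaining the two reductions gives $B \leq_T Y' \leq_T f$, i.e., $f \geq_T B$, which is what we wanted.

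The argument is essentially a one-line observation, so the only thing one has to be slightly careful about is the first step, namely that an infinite set that is c.e.\ in an oracle actually has an infinite subset that is \emph{computable} in that oracle (not merely c.e.). This is the standard trick of listing newly-enumerated elements, and it is where the infinitude of $Y$ is used; there is no genuine obstacle, but it is the only nontrivial move in the proof.
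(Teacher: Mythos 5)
Your overall approach is exactly the paper's: extract an infinite $f$-computable subset of $Y$, then apply introreducibility of $B$. However, the mechanism you describe for the extraction step is not quite right. If you simply ``run the enumerator and, at each stage, output the next freshly-enumerated element,'' you obtain an $f$-computable \emph{injective enumeration} of $Y$, but the range of that enumeration is all of $Y$, which is only c.e.\ in $f$ --- not computable in $f$. (The range of an $f$-computable injective function need not be $f$-computable.) The standard trick you allude to at the end actually requires a further restriction: keep only those freshly-enumerated elements that are \emph{larger} than everything already kept. This yields a strictly increasing $f$-computable sequence $z_0 < z_1 < \cdots$, and the range of a strictly increasing $f$-computable function \emph{is} $f$-computable, since to decide whether $n$ belongs to it one enumerates until some $z_i \geq n$ appears and then checks the finitely many $z_j$ with $j \leq i$. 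Infinitude of $Y$ guarantees this increasing subsequence is infinite. With that correction your argument matches the paper's proof, which cites the same relativized fact (every infinite c.e.\ set contains an infinite computable set) and then invokes introreducibility.
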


\begin{proof}
Every infinite c.e.\ set contains an infinite computable set. Relativising this, we see that there is a $Z\subseteq Y$ computable in $f$ so that $Z$ is infinite. Since $B$ is introreducible, $Z$ computes $B$ and thus $f\geq_T B$.
\end{proof}

We can now prove the theorem.

\begin{proof}[Proof of Theorem~\ref{thm:characterization}]
Let $\dA$ be an upward closed collection of Turing degrees having basis $\dB$. First, assume $\dB$ to be infinite.
For any $\mathbf{b}_i \in \dB$, denote by $B_i$ an introreducible set that belongs to $\mathbf{b}_i$. Next, let
\[
X=\set{ \langle p_{B_0}(i),p_{B_i}(x)\rangle : i,x \in \omega},
\]

where $p_{B_0}$ (resp.\ $p_{B_i}$) is the principal function of $B_0$ ($B_i$).

\smallskip

We shall think of $X$ as consisting of countably many columns such that its $i$th column encodes the set $B_i$, indexed by the $i$th element of $B_0$.
 We claim that $\Spec_{\Rightarrow}(\texttt{Id}, R_{\overline{X}})=\dA$.

\smallskip

We first show that $\dA\subseteq \Spec_{\Rightarrow}(\texttt{Id}, R_{\overline{X}})$. For $j\in \omega$,
consider the function
\[
h_j(x)=\langle p_{B_0}(j), p_{B_j}(x)\rangle,
\]

We have that, for all $j$, $\texttt{Id}$ is $\mathbf{b}_j$-computably reducible to $R_{\overline{X}}$ via $h_j$, since $h_j$ injectively maps singletons of $\texttt{Id}$ into the $j$th column of $X$, and therefore into singletons of $R_{\overline{X}}$. Thus $\dB\subseteq \Spec_{\Rightarrow}(\texttt{Id}, R_{\overline{X}})$ and then, since $\Spec_{\Rightarrow}(\texttt{Id}, R_{\overline{X}})$ is upward closed, $\dA \subseteq \Spec_{\Rightarrow}(\texttt{Id}, R_{\overline{X}})$.

Now we have to prove that  $\Spec_{\Rightarrow}(\texttt{Id}, R_{\overline{X}})\subseteq \dA$. Suppose that $\texttt{Id}\leq_{\mathbf{d}}R_{\overline{X}}$ via some $f$. We want to show that there is $k$ such that $f$ computes $B_k$, hence witnessing that $\mathbf{d}\in \dA$. Notice that the range of $f$ is all contained, with the exception of at most one element, in $X$. Consider two cases.

\begin{enumerate}
\item Suppose that there is $k$ such that $f$ enumerates an infinite subset $Y$ of the $k$th column of $X$. If so, then the set $\set{p_{B_k}(x) : \langle p_{B_0}(k),p_{B_k}(x)\rangle \in Y}$ is an infinite subset of $B_k$ which is c.e.\ in $f$.
By Lemma~\ref{lem:initial}, this means that $f$ computes $B_k$, and so $\mathbf{
d} \geq \mathbf{b}_k$.
\item If $f$ enumerates only finitely many elements for each column, then it must be the case that $f$ picks infinitely many columns, i.e., the set
\[
Y=\set{p_{B_0}(k) : (\exists y)(\langle p_{B_0}(k),y\rangle \in \range(f))}
\]
must be infinite. Since $Y\subseteq B_0$ and $B_0$ is introreducible, by Lemma~\ref{lem:initial} we obtain that $\mathbf{d}\geq \mathbf{b}_0$.
\end{enumerate}
Thus, we have that $\Spec_{\Rightarrow}(\texttt{Id}, R_{\overline{X}})\subseteq \dA$. So we conclude that $\Spec_{\Rightarrow}(\texttt{Id}, R_{\overline{X}})=$~$\dA$.

\smallskip

If $\dB=\set{\mathbf{b}_0,\ldots,\mathbf{b}_n}$ is finite the proof is essentially the same. One can apply the above construction as follows: when $X$ is to be constructed, use the first $n+1$ columns of $X$ to encode $B_i\in \mathbf{b}_i$, for $i \leq n$, and then encode $B_0$ into all remaining columns.
\end{proof}

\section{Reducibility spectra that contain $\Pi^0_1$-classes}
In the previous section, we proved that reducibility spectra are rather expressive: any countable collection of cones can be realised as a reducibility spectrum. In this section, we go one step further. We show that there is a reducibility spectrum that contains a special $\Pi^0_1$-class, i.e., one with no computable member. 
In doing so, we continue our analysis  of reducibility spectra of the form  $\Spec_{\Rightarrow}(\texttt{Id}, R)$,  focusing this time on the case when $R$ is a ceer.

The next definition appears in Andrews and Sorbi~\cite{andrewssorbi2}.

\begin{definition}[Andrews and Sorbi~\cite{andrewssorbi2}]
A ceer $R$ is \emph{light} if $\texttt{Id} \leq R$. Otherwise, if $R$ has infinitely many equivalence classes, it is \emph{dark}.
\end{definition}

Dark ceers exist. As an easy example, consider a $1$-dimensional ceer $R_S$ where $S$ is a simple set. If $\texttt{Id}\leq R_S$ via some computable $f$, then we would have that $\range(f)\smallsetminus S$ is an infinite c.e.\ subset of $\overline{S}$, contradicting the fact that the latter is immune. More generally, the distinction between light and dark ceers reflects a fundamental distinction about how much information we can effectively extract from a given ceer: light ceers are  those for which there exists some computable listing $(y_i)_{i\in\omega}$ of pairwise nonequivalent numbers, while for dark ceers no such listing is possible (see~\cite{andrewssorbi2} for an extensive study of light and dark ceers and how they behave with respect to
the existence of joins and meets of ceers).

For our present interests, it is immediate to observe that $R$ is light if and only if $\Spec_\Rightarrow(\texttt{Id},R)$  is the cone above $\mathbf{0}$. Hence, we shall turn to dark ceers for nontrivial spectra. In particular, we want to investigate how complicated $\Spec_\Rightarrow(\texttt{Id},R)$ can be if $R$ is dark. Our strategy is to consider the class of all partial transversals of $R$.

\begin{definition}
Let $R$ be an equivalence relation. A set $A\subseteq \omega$ is a \emph{partial transversal} of $R$ if  $x,y\in A$ implies
$\neg(xRy)$,  for $x\neq y$. Denote by $P(R)$ the class of all partial transversals of $R$.
\end{definition}

Let us stress that we think of transversals of $R$ as sets and not functions. Hence, according to the last definition, $P(R)$ is a subset of Cantor space instead of a subset of Baire space. Our choice is motivated by the fact that we want to make use of Theorem \ref{thm:treeminimal} below, which holds for computably bounded $\Pi^0_1$-classes.

\begin{proposition}\label{prop:tranversaldegree} For any equivalence relation $R$,
\[
\mathbf{d}\in \Spec_{\Rightarrow}(\emph{\texttt{Id}},R) \mbox{ if and only if $\mathbf{d}$ computes some $A\in P(R)$}.
\]
\end{proposition}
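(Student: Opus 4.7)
The plan is to prove both directions by a straightforward manipulation of ranges and principal functions, the key observation being that a reduction from $\texttt{Id}$ must be injective and map $\omega$ onto a partial transversal of the target.

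For the forward direction ($\Rightarrow$), I would start by unpacking the definition: if $\mathbf{d} \in \Spec_{\Rightarrow}(\texttt{Id}, R)$, there is a $\mathbf{d}$-computable $f$ with $x = y \Leftrightarrow f(x) R f(y)$. Because the classes of $\texttt{Id}$ are singletons, $f$ is injective; and because $f(x) R f(y)$ forces $x = y$, the range $\range(f)$ is an infinite partial transversal of $R$. The range itself is only $\mathbf{d}$-c.e., so I would pass to an infinite $\mathbf{d}$-computable subset by the classical trick: define $g(0) = f(0)$ and $g(n+1) = f(\mu k[\, f(k) > g(n)\,])$, which is total and $\mathbf{d}$-computable since $f$ is injective and $\mathbf{d}$-computable. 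Then $A := \range(g) \subseteq \range(f)$ is $\mathbf{d}$-computable, infinite, and still lies in $P(R)$.

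For the backward direction ($\Leftarrow$), assume $\mathbf{d}$ computes an infinite $A \in P(R)$. Then the principal function $p_A$ is $\mathbf{d}$-computable (as noted in the Notation subsection) and injective. For $x \neq y$, $p_A(x)$ and $p_A(y)$ are distinct elements of $A$, hence non-$R$-equivalent by the definition of partial transversal; for $x = y$ we trivially have $p_A(x) R p_A(y)$ by reflexivity. So $p_A$ witnesses $\texttt{Id} \leq_{\mathbf{d}} R$.

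There is no substantial obstacle here; the only mildly non-routine point is the extraction of a $\mathbf{d}$-computable subset of $\range(f)$ in the forward direction, and this is handled by the standard selection argument above. Implicit throughout is that the partial transversals of interest are infinite (otherwise the statement fails, since any $\mathbf{d}$ computes the empty transversal), which is consistent with the use of $\texttt{Id}$ on the left and with the authors' subsequent appeal to $\Pi^0_1$-class techniques on $P(R) \subseteq 2^\omega$.
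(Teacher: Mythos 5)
Your proof is correct and follows the same outline as the paper's: the forward direction observes that the range of a reduction from $\texttt{Id}$ is a partial transversal, and the backward direction uses the principal function $p_A$. The one place where you are noticeably more careful than the paper is in the forward direction: the paper simply states that $\range(f)\in P(R)$, whereas you explicitly address the fact that $\range(f)$ is only $\mathbf{d}$-c.e.\ a priori and extract a $\mathbf{d}$-computable infinite subset via the monotone selection $g(n+1)=f(\mu k[f(k)>g(n)])$; this is exactly the right fix and makes the argument fully rigorous, so it is a small improvement rather than a deviation.
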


\begin{proof}
$(\Rightarrow)$: If $\mathbf{d}\in \Spec_{\Rightarrow}(\texttt{Id},R)$, then there is a $\mathbf{d}$-computable function $f$ such that $\range(f)\in P(R)$.

$(\Leftarrow)$: If $A$ is a $\mathbf{d}$-computable infinite partial transversal of $R$, then $\texttt{Id} \leq R$ via $p_A$, and therefore $\mathbf{d}\in \Spec_{\Rightarrow}(\texttt{Id},R)$.
\end{proof}

The transversals of a given ceer $R$ obviously form a $\Pi^0_1$-class of functions. Similarly, $P(R)$ forms a $\Pi^0_1$-class of sets.

\begin{lemma}\label{lem:tree}
If $R$ is a ceer, then $P(R)$ is a $\Pi^0_1$-class.
\end{lemma}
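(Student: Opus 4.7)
The plan is to present $P(R)$ as the set of infinite paths through a computable binary tree, which is the standard way to witness that a subset of Cantor space is a $\Pi^0_1$-class. Identifying each $A \subseteq \omega$ with its characteristic function $\chi_A \in 2^\omega$, the condition ``$A \notin P(R)$'' says there exist distinct $x,y$ with $x \in A$, $y \in A$, and $xRy$. Since $R$ is c.e., the pairs $(x,y)$ with $x \neq y$ and $xRy$ can be uniformly enumerated; each such pair defines a basic open set $\{A : x,y \in A\}$ in $2^\omega$, so the complement of $P(R)$ is a c.e.\ union of basic open sets, hence $\Sigma^0_1$.

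To make this concrete, I would fix a computable enumeration $(R_s)_{s \in \omega}$ of $R$ and define
\[
T = \{\sigma \in 2^{<\omega} : (\forall x, y < |\sigma|)\, [\sigma(x) = \sigma(y) = 1 \wedge x \neq y \Rightarrow \neg(x R_{|\sigma|} y)]\}.
\]
Then $T$ is closed under initial segments (a tree), and membership in $T$ is decidable uniformly in $\sigma$, so $T$ is computable. I would then verify the two inclusions $[T] = P(R)$: if $\chi_A$ is a path through $T$, then for any distinct $x, y \in A$ we have $\sigma(x) = \sigma(y) = 1$ for all sufficiently long $\sigma \prec \chi_A$, so $\neg(x R_s y)$ for all $s$, i.e., $\neg(xRy)$; conversely, if $A \in P(R)$, then every initial segment of $\chi_A$ trivially satisfies the defining condition of $T$.

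There is essentially no obstacle here: the only subtlety is the choice between a computable tree and a $\Pi^0_1$ tree, which are interchangeable in Cantor space via the standard pruning, and the small bookkeeping of using $R_{|\sigma|}$ instead of $R$ itself so that the tree is decidable rather than merely co-c.e. The c.e.\ hypothesis on $R$ is used exactly once, to guarantee that ``a collision has appeared by stage $|\sigma|$'' is a decidable predicate on $\sigma$.
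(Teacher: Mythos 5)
Your proof is correct and is essentially identical to the paper's: both build a computable binary tree by admitting $\sigma$ exactly when no $R$-collision among the $1$-positions of $\sigma$ has been enumerated by stage $|\sigma|$, and then observe that the infinite paths are exactly $P(R)$. The only differences are cosmetic (the paper uses $\leq s$ where you use $< |\sigma|$), and your preliminary remark that the complement of $P(R)$ is an effective open set is a nice conceptual gloss on the same fact.
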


\begin{proof}
We construct a binary computable tree $T_R$ such that $[T_R]$ (i.e., the infinite branches through $T_R$)  coincides with $P(R)$. The idea is to freeze a node $v$ of $T_R$ whenever we witness that a branch passing through it fails to encode a partial transversal of $R$. This happens if the path from the root to $v$  picks numbers from the same equivalence class.

More formally, for any $\sigma \in 2^{<\omega}$ of length $s$, let $\sigma$ be in $T_R$ if and only if the following holds
\[
(\forall x,y\leq s) [(x\neq y \wedge \sigma(x)=\sigma(y)=1) \Rightarrow \neg(x R_s y)].
\]

$T_R$ so defined is obviously a computable tree and, from the definition, it is clear that $[T_R]$ coincides with $P(R)$.
\end{proof}

Our goal is now to apply  the following classical theorem due to Jockusch and Soare~\cite{jockusch1972} to the case of reducibility spectra.

\begin{thm}[Jockusch and Soare~\cite{jockusch1972}]\label{thm:treeminimal}
Let $\mathcal{C}$ be a special $\Pi^0_1$-class. $\mathcal{C}$ contains $2^{\aleph_0}$ elements any two of which form a minimal pair.
\end{thm}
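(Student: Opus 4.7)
The plan is to extract from $\mathcal{C}$ a perfect subclass whose continuum many branches are pairwise minimal pairs, via a splitting-tree construction in the style of Jockusch and Soare. Write $\mathcal{C} = [T]$ for a computable binary tree $T$ and pass to the $\Pi^0_1$-subtree $T^{*} \subseteq T$ of extendible nodes. Because $\mathcal{C}$ has no computable member, $T^{*}$ has no isolated infinite branch and is therefore perfect; this immediately gives $|\mathcal{C}| = 2^{\aleph_0}$, so the substantive task is to promote distinctness of paths to the formation of minimal pairs.

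Fix an enumeration $(e_n, i_n)_{n \in \omega}$ of all pairs of Turing functional indices in which every pair appears cofinally often, and inductively build a continuous, incompatibility-preserving embedding $\sigma \mapsto \tau_\sigma \in T^{*}$ of $2^{<\omega}$, with level sets $S_n = \{\tau_\sigma : |\sigma| = n\}$. Between levels $n$ and $n+1$, first refine each $\tau \in S_n$ to some extendible $\tilde{\tau} \sqsupseteq \tau$ so that every pair of distinct refined nodes $\tilde{\tau}, \tilde{\tau}'$ realises an $(e_n, i_n)$-splitting whenever possible, i.e., there is some $x$ with
\[
\Phi_{e_n}^{\tilde{\tau}}(x)\!\downarrow \ \neq\ \Phi_{i_n}^{\tilde{\tau}'}(x)\!\downarrow,
\]
processing the pairs in sequence and using perfectness of $T^{*}$ to keep refinements extendible; then extend each $\tilde{\tau}$ by two incomparable extendible descendants to populate $S_{n+1}$. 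Let $\mathcal{D}$ be the resulting perfect subclass of $\mathcal{C}$, containing $2^{\aleph_0}$ paths, none of them computable.

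It remains to verify that any two distinct $f, g \in \mathcal{D}$ form a minimal pair. Fix a functional pair $(e, i)$ and, by cofinality, pick $n$ larger than the level at which $f$ and $g$ diverge with $(e_n, i_n) = (e, i)$. At stage $n$, $f$ and $g$ pass through distinct nodes $\tau^{f}, \tau^{g} \in S_n$, and the refinement step handled the pair $(\tau^{f}, \tau^{g})$ for the functionals $(e, i)$. In the splitting subcase, $\Phi_{e}^{f}$ and $\Phi_{i}^{g}$ disagree on some $x$, contradicting the assumption $\Phi_{e}^{f} = \Phi_{i}^{g}$ total; in the non-splitting subcase, one concludes that the common total value must be a computable function. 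The main obstacle is precisely this last implication: since extendibility in $T^{*}$ is only $\Pi^0_1$, the non-splitting hypothesis formulated inside $T^{*}$ must be leveraged to show that a computable search in the ambient $T$ for converging segments $\tau \sqsupseteq \tau^{f}$ or $\tau' \sqsupseteq \tau^{g}$ yields values that are \emph{forced}, rather than accidentally, to equal $\Phi_{e}^{f}(x)$; making this forcing rigorous, together with the bookkeeping that every pair $(e, i)$ is addressed past every branching of $f$ and $g$, is where the technical heart of the proof lies.
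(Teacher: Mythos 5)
The paper does not supply a proof here; it simply cites the result from Jockusch and Soare, so there is no in-paper argument to compare against. Judged on its own terms, your sketch follows the right overall strategy: pass to the subtree of extendible nodes, observe that the absence of a computable member forces branching above every such node (uniqueness of a path through a $\Pi^0_1$ class would make that path computable), and then build a perfect splitting subtree by processing pairs of functional indices level by level, so that any $f \neq g$ in the resulting class either exhibit an $(e,i)$-split or fall into a non-splitting situation.

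The difficulty is that your proposal acknowledges, but does not close, the gap that \emph{is} the theorem: the claim that in the non-splitting case the common total value $\Phi_e^f = \Phi_i^g$ is computable. You correctly diagnose why the naive argument fails --- a computable search through the ambient tree $T$ for a convergent segment may return a value computed from a dead node, and membership in the extendible subtree is only $\Pi^0_1$ --- but diagnosing the obstruction is not the same as overcoming it, and the closing sentence (``making this forcing rigorous\dots is where the technical heart of the proof lies'') is an admission that the central step is missing. The standard resolution requires reformulating the splitting question at the level of $\Pi^0_1$ subclasses rather than individual extendible nodes, so that compactness (K\"onig's lemma applied to the finitely branching tree $T$) yields a computable modulus at which surviving approximations stabilise on the forced value; without this, nothing in your construction actually certifies that the value found is correct rather than accidental. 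As written, the proposal is an outline of the right shape with the key lemma left unproved, and so it does not constitute a proof.
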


The main obstruction is that $P(R)$ include also finite transversals. This means that, for any given $R$, we have that $\mathbf{0}\in \set{\deg(A) : A \in P(R)}$, which makes the latter set useless for the analysis of $\Spec_\Rightarrow(\texttt{Id},R)$.
To overcome this problem, we
consider ceers of the following kind.

\begin{lemma}\label{prop:darknonhyper}
There is a dark ceer $R$ that has no infinite equivalence classes and such that $P(R)$ contains an infinite nonhyperimmune element.
\end{lemma}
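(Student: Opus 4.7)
The plan is to produce $R$ via a finite-injury priority construction, maintaining in parallel a sequence $(a_n)_{n\in\omega}$ that will end up being the desired infinite nonhyperimmune partial transversal. I would fix a partition of (a tail of) $\omega$ into consecutive intervals $I_n=[2^{n+2},2^{n+3})$ and impose the requirements
\[
N_e:\ \varphi_e \text{ is not a reduction from }\texttt{Id}\text{ to }R, \qquad P_n:\ a_n\in I_n \text{ and }\neg(a_n R a_m)\text{ for }m\ne n,
\]
priority-ordered $N_0>P_0>N_1>P_1>\cdots$.

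For $N_e$, I would search through the stages for an $x\ne y$ such that $\varphi_e(x),\varphi_e(y)$ converge to distinct values, both at least $2^{e+3}$, both currently in singleton $R$-classes, and neither in $[a_m]_R$ for any $m<e$; once such a pair appears, $N_e$ enumerates $(\varphi_e(x),\varphi_e(y))$ into $R$ and then stops. The strategy for $P_n$ is passive: whenever the current pick $a_n$ becomes $R$-equivalent to some other $a_m$, redefine $a_n$ as the least element of $I_n\setminus\bigcup_{m\ne n}[a_m]_R$.

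The verification breaks into four claims. First, every $N_e$ is met: if $\varphi_e$ is partial or has finite range (so by pigeonhole some $x\ne y$ satisfy $\varphi_e(x)=\varphi_e(y)$), this is immediate; otherwise, at every stage the ``forbidden'' set of values is finite (the values below $2^{e+3}$, the at most $2e$ elements of previously merged classes, and the at most $2e$ elements of $\bigcup_{m<e}[a_m]_R$), while $\varphi_e$ takes infinitely many distinct values, so a valid pair is eventually spotted. Second, all $R$-classes have size at most $2$, since each $N_e$-action merges two singletons and the singleton clause precludes any later $N_{e'}$ from enlarging the resulting $2$-class. Third, every $P_n$ is injured only finitely often: the threshold clause forces every pair enumerated by $N_e$ to lie in $\bigcup_{j>e}I_j$, so contributions to the ``bad'' part of $I_n$ come only from $N_e$ with $e<n$, giving at most $2n$ bad points, much less than $|I_n|=2^{n+2}$; hence each $a_n$ stabilizes to some value $a_n^\infty$. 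Fourth, the set $A=\{a_n^\infty : n\in\omega\}$ is an infinite partial transversal of $R$ with $p_A(n)\le 2^{n+3}$, so $A$ is nonhyperimmune, and all classes having size $\le 2$ yields the absence of infinite equivalence classes.

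The main obstacle is the interaction between diagonalization and transversal maintenance: any $N_e$-action that refutes a computable candidate for $p_A$ may need to merge two current $a_m$'s into the same class, so the final $A$ must be genuinely noncomputable in order for $R$ to stay dark. The threshold and singleton clauses are precisely what control where a merge can land, bounding both the number of injuries each $P_n$ suffers and the size of every $R$-class, which is what makes the finite-injury bookkeeping terminate.
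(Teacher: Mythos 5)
Your construction has a fatal flaw, and it is worth isolating exactly where it fails because the paper must work hard to avoid precisely this trap.

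You insist that each $N_e$-action merge two \emph{singleton} classes, and you use this to conclude that every $R$-class has size at most $2$. But that makes $R$ a $2$-bounded ceer, and every bounded ceer is automatically \emph{light}, not dark. (This is exactly the Proposition at the end of Section~3: for a bounded ceer, take the largest $k$ with infinitely many classes of size $k$ and enumerate the least element of each class the moment you see it reach size $k$; since classes can never grow beyond $k$ outside a finite exceptional set, this gives a c.e.\ infinite partial transversal, hence a computable reduction from $\texttt{Id}$.) So the $R$ you build cannot possibly satisfy all the $N_e$'s, and the contradiction surfaces concretely: let $\varphi_e$ be the total injective computable function that enumerates, in order of appearance, the least element of each size-$2$ class of $R$. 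This $\varphi_e$ \emph{is} a reduction from $\texttt{Id}$ to $R$; yet $N_e$ can never diagonalize against it, because by the time any value $\varphi_e(x)$ converges it already sits inside a size-$2$ class, so it is never an eligible singleton for $N_e$. Your verification of the $N_e$'s also silently assumes the set of previously merged elements visible to $N_e$ is bounded by $2e$, but elements can be merged by lower-priority $N_{e'}$ with $e'>e$ as well, so that bound does not hold; however, even repairing that count cannot save the argument, since boundedness of $R$ is a structural obstruction, not a bookkeeping one.

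The paper's construction is shaped precisely to evade this. Its $\mathcal{P}_e$-action collapses two entire current classes $[x]_{R_s}$ and $[y]_{R_s}$ (not necessarily singletons), subject only to the threshold $\min([x]_{R_s}\cup[y]_{R_s})\ge 3e$, so classes can and do grow without bound. Finiteness of each class is then secured by a separate family of negative requirements $\mathcal{N}_e$, protected by the $3e$-threshold, which gives \emph{finite but unbounded} class sizes --- exactly the regime where a ceer can be dark. Your interval/threshold idea and the nonhyperimmunity argument via $p_A(n)\le 2^{n+3}$ are fine in spirit (and close to the paper's strong-array construction of a nonhyperimmune transversal), but the singleton clause has to go, and once it goes you need something like the paper's $\mathcal{N}_e$'s to recover finiteness of classes.
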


\begin{proof}
We construct $R$ by stages, i.e., $R=\bigcup_{s\in \omega} R_s$. We design the construction to meet the following requirements in such a way that  $P(R)$ contains an infinite nonhyperimmune element
\[
\mathcal{P}_e : \mbox{ if $W_e$ is infinite, then $W_e \notin P(R)$},
\]
\[
\mathcal{N}_e: \mbox{ $[e]_R$ is finite}.
\]

The priority ranking of the requirements is the following
\[
\mathcal{P}_0 > \mathcal{N}_0 > \dots > \mathcal{P}_e > \mathcal{N}_e > \cdots
\]

Notice that if all $\mathcal{P}$-requirements are  met, then $R$ is necessarily dark. Otherwise, there would be an injective computable function $f$ such that $\range(f)\in P(R)$, and this would contradict any requirement $\mathcal{P}_e$ with $W_e=\range(f)$. On the other hand, if all $\mathcal{N}$-requirements are met, then all equivalence classes of $R$ are obviously finite.

\subsection*{Construction} Let us set some terminology.
We \emph{collapse} two equivalence classes $[x]_R$ and $[y]_R$ by adding into $R$  the pairs needed to obtain $[x]_R=[y]_R$. At any given stage, a $\mathcal{P}$-requirement is either in  \emph{stand-by} or  \emph{settled}. Moreover, if some action designed to deal with a given requirement $\mathcal{P}_e$ has the effect of expanding $[i]_R$, then we say that $\mathcal{P}_e$  \emph{disturbs} $\mathcal{N}_i$.

\medskip
\noindent \emph{Stage $0$}: $R=\set{(x,x) : x \in \omega}$. Put all $\mathcal{P}$-requirements in stand-by.

\smallskip

\noindent \emph{Stage $s+1=\langle e, n\rangle$}: Deal with $\mathcal{P}_e$.
If $\mathcal{P}_e$ is in stand-by and there are $x,y\in W_{e,s}$ with $\min([x]_{R_s}\cup [y]_{R_s})\geq3e$, then do the following:

\begin{enumerate}
\item Collapse $[x]_{R_s}$ and $[y]_{R_s}$ and call $(x,y)$ the pair of \emph{witnesses} of $\mathcal{P}_e$;
\item Set $\mathcal{P}_e$ as settled.
\end{enumerate}

Otherwise, do nothing.

\subsection*{Verification} The verification is based on the following claims.

\begin{claim}\label{claim:nrequi}
All $\mathcal{N}$-requirements are satisfied, i.e, $R$ has no infinite equivalence classes.
\end{claim}

\begin{proof}
Towards a contradiction, assume that there is a least requirement $\mathcal{N}_e$ that is not satisfied. It follows from the construction that any $\mathcal{N}$-requirement can be disturbed only by $\mathcal{P}$-requirements with higher priority (in fact, it is immediate to see that if $\mathcal{P}_e$ disturbs $\mathcal{N}_i$ then $e\leq \left \lfloor{\frac{i}{3}}\right \rfloor$). Let $s$ be a stage such that no $\mathcal{P}$-requirement with priority higher than $\mathcal{N}_e$ acts after $s$. Such $s$ must exist since each $\mathcal{P}$-requirement acts at most once. We have that $[e]_R$ will never be expanded after stage $s$, since no $\mathcal{P}$-requirement with lower priority is allowed to disturb it, and thus eventually remains finite.
\end{proof}

An immediate consequence of the last claim is that $R$ has infinitely many classes.

\begin{claim}
All $\mathcal{P}$-requirements are satisfied, i.e, $R$ is dark.
\end{claim}

\begin{proof}
Towards a contradiction, assume that there is a least requirement $\mathcal{P}_e$ that is not satisfied. By Claim~\ref{claim:nrequi}, we know that there must be a stage $s$ such that all $\mathcal{N}$-requirements with  priority higher than $\mathcal{P}_e$ are never disturbed after $s$. This means that there exists a least $k$ such that $\bigcup_{i<e}[i]_R\subseteq \set{x : 0 \leq x < k}$. Therefore, since $W_e$ is infinite and $R$ has infinitely many classes, we have that there is a stage $t+1=\langle e,n\rangle>s$ such that $x,y \in W_{e,t}$ and $\min([x]_{R_t}\cup [y]_{R_t})>\max \set{k,3e}$.
So, at stage $t$ the construction collapses $[x]_{R_t}$ and $[y]_{R_t}$. But this action excludes $W_e$ from the partial transversals of $R$. That is to say, we obtain $W_e\notin P(R)$, which contradicts the initial hypothesis that $\mathcal{P}_e$ was not satisfied.
\end{proof}

\begin{claim}
$P(R)$ contains an infinite nonhyperimmune member.
\end{claim}

\begin{proof}
Let $Z$ be the set of numbers that are not witnesses of any $\mathcal{P}$-requirement. 
These elements correspond to the singletons of $R$, as a given equivalence class $[x]_R$ has size bigger than $1$ if and only if there is a $\mathcal{P}$-requirement that picks $x$ as a witness. Furthermore, notice that for all $k$ the set $\set{x : 0\leq x \leq 3k}$ contains at least $k$ singletons of $R$. This is because, by construction, if $\mathcal{P}_e$ chooses a pair of witnesses $(x,y)$, then $\min(x,y)\geq 3e$. We use this fact to build a strong array containing an infinite partial transversal as follows. Without loss of generality, assume that $|[0]_R|=1$. Let $f$ be a  computable function defined by recursion
\[
D_{f(0)}=\set{0},
\]
\[
D_{f(i+1)}=\set{x: \max(D_{f(i)})<x\leq 3(\max(D_{f(i)})+1)}.
\]

\medskip

It is immediate to notice that the so defined sets are pairwise disjoint.
Recall that for all $k$ the set $\set{x : 0\leq x\leq 3(k+1)}$ contains at least $k+1$ singletons of $R$. From this fact, we obtain that, for all $i$, there exists $y$ such that $|[y]_R|=1$ and
\begin{equation}
y \in D_{f(i+1)}=(\set{x : 0\leq x\leq 3(\max(D_{f(i)}+1))}\smallsetminus \set{0\leq x\leq \max(D_{f(i)})}).
\end{equation}

Consider now the set $A=\set{x : |[x]_R|=1 \wedge (\exists i)(x \in D_{f(i)})}$. It is obvious that $A\in P(R)$, because $A$ contains only singletons of $R$.  From $(1)$ above, it follows that $A \cap D_{f(i)} \neq \emptyset$  for all $i$. Thus, $A\in P(R)$ is infinite and nonhyperimmune.
\end{proof}
\end{proof}

\begin{thm}\label{thm:nocount}
There is a reducibility spectrum (not containing $\mathbf{0}$) that contains a special $\Pi^0_1$-class.
\end{thm}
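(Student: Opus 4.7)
The plan is to take the dark ceer $R$ produced in Lemma~\ref{prop:darknonhyper} and show that $\Spec_{\Rightarrow}(\texttt{Id},R)$ already does the job. Two ingredients are immediate. First, since $R$ is dark, no computable function can have range in $P(R)$ as an infinite set, so by Proposition~\ref{prop:tranversaldegree} we have $\mathbf{0}\notin \Spec_{\Rightarrow}(\texttt{Id},R)$. Second, Proposition~\ref{prop:tranversaldegree} tells us that the spectrum is exactly $\{\mathbf{d}:\mathbf{d}\text{ computes some }A\in P(R)\}$, so it suffices to locate a special $\Pi^0_1$-subclass of $P(R)$ whose members are all infinite.

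To build such a subclass, I would exploit the nonhyperimmunity part of Lemma~\ref{prop:darknonhyper}. Let $A\in P(R)$ be an infinite nonhyperimmune partial transversal and fix a computable function $f$ such that the sets $D_{f(0)},D_{f(1)},\dots$ are pairwise disjoint finite sets each meeting $A$. Then define
\[
\mathcal{C}=\{B\in P(R): (\forall i)\, B\cap D_{f(i)}\neq\emptyset\}.
\]
Because $P(R)$ is a $\Pi^0_1$-class by Lemma~\ref{lem:tree} and each condition $B\cap D_{f(i)}\neq\emptyset$ is uniformly clopen (the $D_{f(i)}$ being finite and computably presented), $\mathcal{C}$ is a $\Pi^0_1$-class; it is nonempty because $A\in\mathcal{C}$.

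The decisive step is checking that $\mathcal{C}$ is special. Any $B\in\mathcal{C}$ meets infinitely many pairwise disjoint finite sets, and is therefore infinite. If $B$ were also computable, then its principal function $p_B$ would be a computable injection with range in $P(R)$, giving $\texttt{Id}\leq R$ via $p_B$ and contradicting the darkness of $R$. Hence $\mathcal{C}$ has no computable member. Combining this with Proposition~\ref{prop:tranversaldegree} gives $\{\deg(B):B\in\mathcal{C}\}\subseteq \Spec_{\Rightarrow}(\texttt{Id},R)$, so the spectrum contains the special $\Pi^0_1$-class $\mathcal{C}$; invoking Theorem~\ref{thm:treeminimal} on $\mathcal{C}$ then even produces $2^{\aleph_0}$ many pairwise-minimal-pair degrees inside the spectrum.

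The main obstacle is really isolating an $R$ that is simultaneously dark (to exclude $\mathbf{0}$) yet has $P(R)$ rich enough to contain a nonhyperimmune member (so that $\mathcal{C}$ is nonempty and concentrates on infinite transversals rather than finite partial ones). That work is exactly what Lemma~\ref{prop:darknonhyper} accomplishes, and once it is in hand the argument above is a short and essentially topological application of the Jockusch--Soare basis theorem.
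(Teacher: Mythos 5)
Your proposal is correct and follows essentially the same route as the paper: take the dark ceer $R$ and nonhyperimmune $A\in P(R)$ from Lemma~\ref{prop:darknonhyper}, cut $P(R)$ down to the members meeting every $D_{f(i)}$, observe this subclass is $\Pi^0_1$ and consists of infinite sets, and invoke darkness to rule out computable members. The only (cosmetic) difference is that the paper realizes the subclass concretely as a computable subtree $T\subseteq T_R$, whereas you phrase the same thing abstractly as an intersection of $P(R)$ with uniformly clopen conditions.
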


\begin{proof}
Let $R$ be as in Lemma~\ref{prop:darknonhyper} and let $A\in P(R)$ be an infinite nonhyperimmune set.  Let $\set{D_{f(i)}}_{i \in \omega}$ be a strong array witnessing the nonhyperimmunity of  $A$. From $R$ we construct the tree $T_R$ as in the proof of Lemma~\ref{lem:tree}. Next, we computably build a subtree $T$ of $T_R$ by allowing in $T$ only the elements of $P(R)$ that meet every $D_{f(i)}$. That is to say,  we freeze a node $v$ of $T$ if we see that any path passing through $x$ fail to  intersect some $D_{f(i)}$.
More formally, for any $\sigma \in 2^{<\omega}$ of length $s$, let $\sigma$ be in $T$ if and only $\sigma \in T_\R$ and
\[
(\forall i\leq s)(\exists x) (x \in D_{f(i)} \wedge \sigma(x)=1).
\]

$T$ is obviously computable, because $T_\R$ is computable and in order to establish whether some $\sigma\in T$ we have to check only finitely many finite sets.
Moreover $[T]$ is special, i.e., it has no computable member. This follows from the following facts: $T$ is a subtree of $T_\R$; all infinite elements of $P(R)$ are immune (since $R$ is dark); and
any member of $[T]$ is (the characteristic function of) an infinite set, since no finite set can intersect $\set{D_{f(n)}}_{n\geq 0}$ infinitely many times.
%
%
%
\end{proof}

Now, let $R$ be the ceer constructed in the proof of Theorem~\ref{thm:nocount} and assume that $\Spec_\Rightarrow(\Id,R)$ has a countable basis. Then, as by Theorem~\ref{thm:treeminimal} $\Spec_\Rightarrow(\Id,R)$ contains continuum many minimal pairs there must be an element of the basis Turing below a minimal pair in $\Spec_\Rightarrow(\Id,R)$ and thus this element must be $\mathbf 0$ contradicting that $R$ is dark. We have just proven the following.
\begin{corollary}\label{cor:nocount}
  There is a reducibility spectrum with no countable basis.
\end{corollary}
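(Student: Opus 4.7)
The plan is to use the ceer $R$ produced in the proof of Theorem~\ref{thm:nocount} and argue by contradiction, combining Theorem~\ref{thm:treeminimal} with a cardinality/pigeonhole step. Concretely, I would first observe that by Proposition~\ref{prop:tranversaldegree} the reducibility spectrum $\Spec_\Rightarrow(\texttt{Id},R)$ consists exactly of the Turing degrees that compute an element of $P(R)$, and by Theorem~\ref{thm:nocount} this spectrum contains (the degrees of) a special $\Pi^0_1$-class $\mathcal{C}$. Since $R$ is dark, no computable partial transversal of $R$ is infinite, and every member of $\mathcal{C}$ codes an infinite transversal; in particular $\mathbf{0}\notin \Spec_\Rightarrow(\texttt{Id},R)$.

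Next, I would suppose for contradiction that the spectrum has a countable basis $\dB$. By Theorem~\ref{thm:treeminimal} there exist $2^{\aleph_0}$ many elements of $\mathcal{C}$, any two of which form a minimal pair; list their degrees as $\{\mathbf{a}_\alpha : \alpha<2^{\aleph_0}\}$. Each $\mathbf{a}_\alpha$ lies in $\Spec_\Rightarrow(\texttt{Id},R)$, so by definition of a basis there is some $\mathbf{b}_\alpha\in\dB$ with $\mathbf{b}_\alpha\leq \mathbf{a}_\alpha$. Since $|\dB|\leq \aleph_0 < 2^{\aleph_0}$, by pigeonhole some single $\mathbf{b}\in\dB$ satisfies $\mathbf{b}\leq \mathbf{a}_\alpha$ and $\mathbf{b}\leq \mathbf{a}_\beta$ for distinct indices $\alpha\neq\beta$. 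But $(\mathbf{a}_\alpha,\mathbf{a}_\beta)$ is a minimal pair, so $\mathbf{b}=\mathbf{0}$.

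Finally, since $\dB\subseteq \Spec_\Rightarrow(\texttt{Id},R)$, this gives $\mathbf{0}\in\Spec_\Rightarrow(\texttt{Id},R)$, contradicting darkness of $R$. The only nontrivial point is recalling that basis elements are themselves required to lie in the spectrum (this is implicit in our definition of basis, since $\mathbf{b}\leq\mathbf{b}$ places $\mathbf{b}$ in the upward closure); the rest is a direct cardinality argument, so there is essentially no obstacle beyond assembling the pieces already established.
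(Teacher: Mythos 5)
Your argument is correct and takes essentially the same route as the paper's: apply Theorem~\ref{thm:treeminimal} to the special $\Pi^0_1$-class constructed in Theorem~\ref{thm:nocount}, then use the minimal-pair property together with countability of the putative basis to force a basis element down to $\mathbf{0}$, contradicting darkness of $R$. You merely make explicit the pigeonhole step that the paper leaves implicit.
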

In the next section we will prove an even stronger result; that there are reducibility spectra with no basis at all.

To conclude the special focus on spectra of the form $\Spec_\Rightarrow(\Id,R)$,
it is worth noticing that in the above proofs we never used the fact that all equivalence classes of $R$ are finite. In fact, it can be shown that the same result holds also for (properly constructed) equivalence relations with infinite equivalence classes. Yet, our choice makes the result sharp, in the sense of the following proposition.

\begin{proposition}
If $R$ is a bounded ceer, then $\Spec_\Rightarrow(\emph{\texttt{Id}},R)$ has  a countable basis; in fact,  $\mathbf{0}\in \Spec_\Rightarrow(\emph{\texttt{Id}},R)$.
\end{proposition}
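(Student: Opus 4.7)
By Proposition~\ref{prop:tranversaldegree}, it suffices to exhibit a computable infinite partial transversal of $R$; since the spectrum is upward closed (Proposition~\ref{fact:upward-density}), showing $\mathbf{0} \in \Spec_{\Rightarrow}(\texttt{Id},R)$ gives $\{\mathbf{0}\}$ as a one-element, hence countable, basis. The plan is to prove by induction on $n$ that every $n$-bounded ceer with infinitely many classes admits such a transversal.

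For the base case $n=1$, every $R$-class has size exactly~$1$, so $R = \texttt{Id}$ and $\omega$ itself is a partial transversal. For the inductive step, consider the set $F_n := \{x : |[x]_R| = n\}$. This set is c.e., and moreover, the key observation is that whenever an element $x$ enters $F_n$ -- which happens as soon as the enumeration of $R$ exhibits $n$ mutually $R$-equivalent elements including $x$ -- the entire class $[x]_R$ is known at that moment, because it has reached the maximum size allowed by the bound and cannot grow further.

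The argument then splits according to whether $F_n$ is infinite. If it is, enumerate $F_n$ as $x_0, x_1, \ldots$ and pick representatives greedily: $y_0 := x_0$ and $y_{k+1} := $ the first $x_i$ outside $[y_0]_R \cup \cdots \cup [y_k]_R$ (membership in each $[y_j]_R$ being decidable by the observation above). Since $F_n$ consists of infinitely many size-$n$ classes, $y_{k+1}$ always exists, and $\{y_k : k \in \omega\}$ is the desired computable partial transversal. If instead $F_n$ is finite, then $F_n$ is computable as a finite set, and $\omega \setminus F_n$ is a cofinite, $R$-invariant, computable subset of $\omega$. The restriction of $R$ to $\omega \setminus F_n$, transported to $\omega$ along the increasing bijection, is then an $(n-1)$-bounded ceer with infinitely many classes, to which the induction hypothesis applies; pulling the resulting transversal back along the bijection finishes the step.

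The only subtlety is that the dichotomy ``$F_n$ finite versus infinite'' is not decidable from an index for $R$, so the construction is non-uniform in $R$. This is harmless for the existence claim: one of the two alternatives must hold for the given $R$, and either way a computable infinite partial transversal is produced.
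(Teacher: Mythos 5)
Your argument is correct in essence and rests on exactly the same observation as the paper's: there is a maximal class size $k$ occurring infinitely often, only finitely many elements sit in larger classes, and once a size-$k$ class outside this finite exceptional set is fully enumerated it cannot grow, so a representative can be picked out effectively. Unwinding your induction on $n$ produces precisely the paper's $k$ (the largest size with infinitely many classes) and its finite exceptional set $Y=\{y:|[y]_R|>k\}$, so the two proofs coincide, the paper simply locating $k$ directly rather than by peeling off class sizes one at a time. One small imprecision: the greedy set $\{y_k\}$ you build when $F_n$ is infinite is c.e.\ but not obviously computable (rejecting a candidate $z$ requires certifying $z\notin F_n$, which is only $\Pi^0_1$); to invoke Proposition~\ref{prop:tranversaldegree} you should either pass to an infinite computable subset of this c.e.\ set, or note directly --- as the paper does --- that an injective computable enumeration of a c.e.\ infinite partial transversal already witnesses $\texttt{Id}\leq_{\mathbf{0}}R$.
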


\begin{proof}
Let $k$ be the largest number such that $R$ has infinitely many equivalence classes of size $k$. Let $Y=\set{y : |[y]_R|>k}$. $Y$ is obviously finite. We can then easily construct a c.e.\ partial transversal $A$ of $R$: when we witness that $|[x]_R|=k$, for some $x\notin Y$, we put the least element of $[x]_R$ into $A$.
Then, $\texttt{Id} \leq_{\mathbf{0}} R$ via any computable function $f$ with $\range(f)=A$.
\end{proof}

\section{A reducibility spectrum with no basis}
In this section we complete the picture about the complexity of reducibility spectra. Having shown that these spectra can be with no countable basis, it is natural to ask whether they all have a basis.  Notice that the question has not been already answered by Corollary~\ref{cor:nocount}, since the spectrum considered in the proof might have a basis which is uncountable. In this section, we directly construct a reducibility spectrum with no basis at all. As in the previous section, we prove that the result already holds for ceers.
The idea for building the desired spectrum  is to make it downward dense while at the same time excluding $\mathbf{0}$ from it. More precisely, we aim to build a pair of ceers $(R,S)$ such that

\smallskip

\begin{enumerate}
\item $\mathbf{0} \notin \Spec_\Rightarrow(R,S)$,
\item if $\mathbf{d} \in \Spec_\Rightarrow(R,S)$, then  $\set{ \mathbf{c}:\mathbf{0} <\mathbf{c}< \mathbf{d}} \cap \Spec_\Rightarrow(R,S) \neq \emptyset$.
\end{enumerate}

\smallskip

As is clear, a spectrum that satisfies $(1)$ and $(2)$ can not have a basis.
The construction of $R$ and $S$ is based on the following notion due to  Cleave~\cite{Cleave:61}.

\begin{definition}[Cleave~\cite{Cleave:61}]
A sequence of pairwise disjoint c.e.\ sets $E_0\ldots,E_{n-1}$ is \emph{creative} if there is a computable function $p$ such that, if $W_{i_0},\ldots,W_{i_{n-1}}$ is a sequence of $n$ pairwise disjoint c.e.\ sets such that $W_{i_k} \cap E_{k}= \emptyset$, for all $ k<n$, then
\[
p(i_0,i_1,\ldots,i_{n-1})\in \overline{\bigcup_{0\leq k< n}{E_k \cup W_{i_k}}}.
\]
\end{definition}

Creative sequences generalise effective inseparability from pairs of c.e.\ sets to sequences of them. The underlying intuition of the latter definition is that no set of a creative sequence can be effectively separated from the others.

Effective inseparability plays a crucial role for the theory of ceers. For example, Andrews, Lempp, J.~Miller, Ng, San Mauro, and Sorbi~\cite{Andrews:14} showed that \emph{uniformly effectively inseparable} ceers
(i.e., ceers whose equivalence classes are pairwise effective inseparable, and such that this effective inseparability can be witnessed uniformly)
 are universal. This fact  subsumes all known results of universality for ceers and stands in analogy with the following  classical results: any pair of effectively inseparable sets is $1$-complete, as proven by Smullyan~\cite{smullyan1961theory}, and any creative
sequence is $1$-complete, as proved by Cleave~\cite{Cleave:61}. Cleave's result, in particular, means that, if $E_0,\ldots, E_{n-1}$ is a creative sequence, then for all sequences of pairwise disjoint c.e.\ sets $G_0,\ldots,G_{m-1}$  with $m\leq n$, there is an injective computable function $h$ such that
\[
\mbox{if }x \in G_k \mbox{ for some $k< n$}, \mbox{ then }  h(x) \in E_k;
\]
\[
\mbox{if } x \notin \bigcup_{k<n} G_k, \mbox{ then }  h(x) \notin \bigcup_{k < n} E_k.
\]

As an immediate corollary of the last fact we obtain the following.

\begin{proposition}\label{prop:univ-crea}
For all $n$, there is a $n$-dimensional ceer $R_{E_0,\ldots,E_{n-1}}$ such that, for any $m$-dimensional ceer $R_{G_0,\ldots,G_{m-1}}$ with $m\leq n$, $R_{G_0,\ldots,G_{m-1}}\leq_{\mathbf{0}} R_{E_0,\ldots,E_{n-1}}$.
\end{proposition}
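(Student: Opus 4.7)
My plan is to invoke Cleave's completeness property for creative sequences, as restated just above the statement, essentially as a black box. Fix a creative sequence $E_0,\ldots,E_{n-1}$ of pairwise disjoint c.e.\ sets (the existence of such sequences for every $n$ is itself part of Cleave's work), and take the associated $n$-dimensional ceer $R_{E_0,\ldots,E_{n-1}}$ as the candidate. Given any $m$-dimensional ceer $R_{G_0,\ldots,G_{m-1}}$ with $m\leq n$, the generating sets $G_0,\ldots,G_{m-1}$ are by definition pairwise disjoint c.e.\ sets, so the quoted completeness property applies and supplies an injective computable $h$ sending each $G_k$ into the corresponding $E_k$ (for $k<m$) and mapping the complement of $\bigcup_{k<m}G_k$ into the complement of $\bigcup_{k<n}E_k$.

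It remains only to check that this $h$ is a reduction from $R_{G_0,\ldots,G_{m-1}}$ to $R_{E_0,\ldots,E_{n-1}}$, which is a routine case split using the two listed properties of $h$, injectivity of $h$, and pairwise disjointness of the $E_k$'s. If $x\,R_{G_0,\ldots,G_{m-1}}\,y$ with $x\neq y$, then $x,y\in G_k$ for some $k<m$, so $h(x),h(y)\in E_k$ and hence $h(x)\,R_{E_0,\ldots,E_{n-1}}\,h(y)$. Conversely, if $\neg(x\,R_{G_0,\ldots,G_{m-1}}\,y)$, then $x\neq y$ and no single $G_k$ contains both, yielding three subcases: both points lie outside $\bigcup_{k<m}G_k$; exactly one of them lies inside some $G_j$; or they lie in distinct $G_j,G_l$. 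In each subcase the two properties of $h$ together with the pairwise disjointness of the $E_k$'s forbid any common $E_k$ from containing both $h(x)$ and $h(y)$, while injectivity of $h$ gives $h(x)\neq h(y)$; hence $\neg(h(x)\,R_{E_0,\ldots,E_{n-1}}\,h(y))$.

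There is no real obstacle: the entire content is packaged in the completeness property and in the existence of creative sequences of arbitrary length, both due to Cleave. Once those are invoked, the reduction $h$ is read off directly, and the verification is exactly the short case analysis above. This is precisely why the paper advertises the proposition as an immediate corollary of the preceding discussion.
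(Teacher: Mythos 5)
Your proposal is correct and follows exactly the same route as the paper: fix a creative sequence $E_0,\ldots,E_{n-1}$, invoke Cleave's completeness property (as restated just before the proposition) to obtain the injective computable $h$, and observe that the two structural properties of $h$ together with its injectivity and the pairwise disjointness of the $E_k$'s make it a reduction between the induced ceers. The paper's own proof is even more compressed, omitting the case split you spell out, but the substance is identical.
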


\begin{proof}
Just choose the sequence $E_0,\ldots,E_{n-1}$ to be creative. Cleave's result guarantees that, given any sequence ${G_0,\ldots,G_{m-1}}$ with $m\leq n$, there is a $1$-reduction $h$ from ${G_0,\ldots,G_{m-1}}$ into $E_0,\ldots,E_{n-1}$. Since $h$ is injective, $h$ will also induce a computable reduction from $R_{G_0,\ldots,G_{m-1}}$ into $R_{E_0,\ldots,E_{n-1}}$.
\end{proof}

From now on, we denote by $\set{U_i}_{i\in \omega}$ a uniformly c.e.\ sequence  of ceers such that, for all $i$, $U_i$ is generated by a creative sequence of length $i+1$. As an example of such a sequence the reader can think of each $U_i$ as $R_{K_0,\ldots,K_i}$, where $K_j=\set{x: \phi_x(x)\downarrow=j}$ for $j \leq i$.

\smallskip

For the next lemma, recall that $B$ and $C$ \emph{split} $A$ (notation: $A= B\sqcup C$) if $B\cap C=\emptyset$ and $A=B\cup C$.

\begin{lemma}\label{lem:descend}
Let $A$ be a noncomputable c.e.\ set. There are computable functions $f,g$ such that $W_{f(0)}=A$, $W_{g(0)}=\emptyset$, and,  for all $i$,
\begin{enumerate}
\item  $W_{f(i+1)}\sqcup W_{g(i+1)}=W_{f(i)}$,
\item and $W_{f(i)}>_T W_{f(i+1)}>_T \mathbf{0}$.
\end{enumerate}
\end{lemma}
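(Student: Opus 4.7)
The plan is to iterate the Sacks Splitting Theorem in its uniform form. Concretely, there exist total computable functions $\sigma_0,\sigma_1:\omega\to\omega$ such that, for every index $e$, $W_e=W_{\sigma_0(e)}\sqcup W_{\sigma_1(e)}$ and, whenever $W_e$ is noncomputable, $W_{\sigma_0(e)},W_{\sigma_1(e)}<_T W_e$. This uniformity is standard, as Sacks's original priority construction uses $e$ only through the enumeration of $W_e$ and produces the splitting via the $s$-$m$-$n$ theorem (see, e.g., Soare~\cite{Soare:Book}).

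Given $\sigma_0,\sigma_1$, define $f$ and $g$ by primitive recursion. Fix c.e.\ indices $e_A$ for $A$ and $e_\emptyset$ for $\emptyset$, and let $f(0)=e_A$, $g(0)=e_\emptyset$; for $i\geq 0$, set $f(i+1)=\sigma_0(f(i))$ and $g(i+1)=\sigma_1(f(i))$. Both functions are total computable by composition, and item~(1) of the lemma holds by construction.

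For item~(2), I would induct on $i$. The base case uses the hypothesis that $A$ is noncomputable. For the inductive step, assume $W_{f(i)}>_T \mathbf{0}$: the uniform Sacks splitting then yields $W_{f(i+1)},W_{g(i+1)}<_T W_{f(i)}$, whence $W_{f(i+1)}<_T W_{f(i)}$. Noncomputability of $W_{f(i+1)}$ is automatic, since otherwise $W_{g(i+1)}=W_{f(i)}\smallsetminus W_{f(i+1)}\equiv_T W_{f(i)}$, contradicting $W_{g(i+1)}<_T W_{f(i)}$. The only real obstacle is thus the appeal to Sacks splitting in uniform form; once that is granted, the rest is routine recursion and a two-line induction.
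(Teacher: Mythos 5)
Your proof is correct and takes essentially the same route as the paper: both iterate the uniform Sacks Splitting Theorem, obtaining indices via the $s$-$m$-$n$ theorem, and then verify the strict Turing descent by induction. Your extra remark deriving noncomputability of $W_{f(i+1)}$ from $W_{g(i+1)}<_T W_{f(i)}$ is a harmless small addition; the paper's cited formulation of Sacks Splitting already supplies noncomputability of the split pieces directly.
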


\begin{proof}
Given any noncomputable c.e.\ set $A$,  Sacks Splitting theorem~\cite[Theorem 7.5.1.]{Soare:Book} allows to construct a splitting $B \sqcup C$ of $A$ into noncomputable c.e.\ sets such that both $B$ and $C$ avoid the cone above $A$. It is enough to apply the theorem infinitely many times to obtain $f$ and $g$. Indeed, suppose we have defined $W_{f(i)}$ and $W_{g(i)}$. By applying Sacks' construction to $W_{f(i)}$, one obtains a splitting $B_i\sqcup C_i$ of $W_{f(i)}$. Then, we define $f(i+1)$ and $g(i+1)$ equal (respectively) to an index of $B_i$ and an index of $C_i$, where these indices are uniformly obtained by the $s$-$m$-$n$ theorem.
\end{proof}

\begin{definition}
The \emph{cylinder}  of a (possibly infinite) family $\mathcal{F}=\set{A_0,A_1,\ldots}$ of sets is the equivalence relation $R$ defined by
\[
\langle i, x\rangle R \langle j, y\rangle \Leftrightarrow i=j \wedge [ x=y  \vee (i < |\mathcal{F}| \wedge x,y \in A_i)].
\]
\end{definition}

\begin{thm}
There is a reducibility spectrum of ceers with no basis.
\end{thm}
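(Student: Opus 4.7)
The plan is to realise an upward-closed class of Turing degrees that is downward dense and does not contain $\mathbf 0$, from which the absence of a basis follows immediately. Apply Lemma~\ref{lem:descend} to any noncomputable c.e.\ set $A$ to obtain the descending chain $A = W_{f(0)} >_T W_{f(1)} >_T W_{f(2)} >_T \cdots >_T \mathbf{0}$, together with the splittings $W_{g(i+1)}$ satisfying $W_{f(i+1)} \sqcup W_{g(i+1)} = W_{f(i)}$. I claim the class $\dA = \{\mathbf d : (\exists i)\, W_{f(i)} \leq_T \mathbf d\}$ is the target: it is upward closed by definition, $\mathbf 0 \notin \dA$ since every $W_{f(i)}$ is noncomputable, and if $\mathbf d \in \dA$ is witnessed by $i$, then $\mathbf 0 < \deg(W_{f(i+1)}) < \deg(W_{f(i)}) \leq \mathbf d$, with $\deg(W_{f(i+1)}) \in \dA$. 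So any pair $(R,S)$ of ceers with $\Spec_{\Rightarrow}(R,S) = \dA$ yields a reducibility spectrum with no basis.

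To realise $\dA$ as such a spectrum, I would take $R$ and $S$ to be cylinders of carefully chosen c.e.\ families in the sense of the cylinder definition just above the theorem. Concretely, let $S$ be the cylinder of the family $\{W_{f(0)}, W_{f(1)}, W_{f(2)}, \dots\}$, so that the $i$-th column of $S$ has an infinite equivalence class equal to $W_{f(i)}$ and countably many singletons; and let $R$ be a cylinder whose columns package copies of the universal creative ceers $U_n$ from Proposition~\ref{prop:univ-crea}, designed so that every reduction $R \leq_{\mathbf d} S$ is forced to embed an infinite equivalence class into some column of $S$. For the positive direction, fix $i$ and exhibit a $W_{f(i)}$-computable reduction of $R$ into $S$ that routes all infinite-class material into the $i$-th column: the Sacks-splitting structure from Lemma~\ref{lem:descend} guarantees that $W_{f(i)}$ is enough to decide the membership questions that arise, while the $1$-completeness of creative sequences given by Proposition~\ref{prop:univ-crea} supplies, uniformly in $i$, a $\mathbf 0$-computable matching of the creative pieces of $R$ with their $W_{f(i)}$-flavoured targets in $S$.

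For the negative direction, given any reduction $h \colon R \leq_{\mathbf d} S$, I would track the images of the infinite equivalence classes of $R$ under $h$: each must sit inside an infinite equivalence class of $S$, hence inside a single column, yielding some index $j$. An argument in the spirit of Lemma~\ref{lem:initial} then shows that from $h$ and the relevant column one enumerates, via $\mathbf d$, an infinite subset of $W_{f(j)}$ from which $\mathbf d$ computes $W_{f(j)}$, placing $\mathbf d$ in $\dA$. The main obstacle is calibrating the combinatorics of $R$ so that it is simultaneously rigid enough to forbid any $\mathbf d \notin \dA$ from assembling a reduction (by forcing the images of the infinite classes of $R$ to cohere around a single column of $S$) and flexible enough to admit, for each $i$, a genuine $W_{f(i)}$-computable reduction into the $i$-th column; creative sequences together with the descending Turing degrees of the Sacks chain are precisely what makes this dual requirement realisable.
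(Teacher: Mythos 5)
Your identification of the target class $\dA$---the upward closure of the descending Sacks chain $\deg(W_{f(0)}) > \deg(W_{f(1)}) > \cdots > \mathbf 0$---and the observation that such a class is downward dense and $\mathbf 0$-free, hence basisless, is exactly right and matches the paper. The implementation, however, has two genuine gaps.

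First, you have swapped the roles of the two ceers. The paper puts the cylinder of $\{W_{f(i)}\}$ on the \emph{source} side and builds the \emph{target} from the creative ceers $U_n$. This is not cosmetic: Proposition~\ref{prop:univ-crea} (via Cleave) gives $\mathbf 0$-computable reductions \emph{into} a creative sequence, never \emph{out of} one, so universality only helps when the creative pieces sit in the codomain. With your $R$ a cylinder of $U_n$'s and $S$ a cylinder of $\{W_{f(i)}\}$, the positive direction collapses. Each column $n\geq 1$ of your $R$ contains $n+1$ infinite classes $E_0^{(n)},\ldots,E_n^{(n)}$ that are members of a creative sequence and hence individually $1$-complete; yet each column of $S$ contains only a single infinite class $W_{f(j)}$. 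A $W_{f(i)}$-computable reduction $h$ would have to separate the images of $E_0^{(n)},\ldots,E_n^{(n)}$ from one another and from the singletons, and in any of the ways this can happen one extracts from $h$ (together with finitely many of the oracles it is already allowed to use) a decision procedure for some $E_k^{(n)}$, forcing $W_{f(i)} \geq_T \mathbf 0'$. Since the Sacks chain can start below any incomplete c.e.\ degree, there is in general no $W_{f(i)}$-computable reduction $R\leq S$ with your orientation, so $\dA \not\subseteq \Spec_\Rightarrow(R,S)$. Your appeal to ``$1$-completeness supplies a matching'' reads the completeness of the creative pieces as an asset, when here it is precisely the obstruction.

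Second, your negative direction invokes an argument ``in the spirit of Lemma~\ref{lem:initial}'' to conclude that enumerating an infinite subset of $W_{f(j)}$ lets $\mathbf d$ compute $W_{f(j)}$. That would require $W_{f(j)}$ to be introreducible, but a noncomputable c.e.\ set is never introreducible---it has an infinite computable subset. This is exactly why the paper introduces a noncomputable \emph{co-c.e.}\ introreducible set $B$, sets $W_{f(0)} = \overline B$, keeps a designated $\texttt{Id}$ column in $S$ to absorb surplus singletons, and collapses the $i$th column of $S$ whenever $i$ enters $\overline B$; the negative direction then tracks column \emph{indices} (landing in the introreducible $B$) rather than elements of the $W_{f(j)}$'s. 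Your construction omits all of this machinery, and without it neither inclusion $\dA \subseteq \Spec_\Rightarrow(R,S)$ nor $\Spec_\Rightarrow(R,S) \subseteq \dA$ goes through. The high-level plan is sound, but to repair the proof you need to restore the paper's orientation of the reduction, introduce the introreducible co-c.e.\ anchor, and add the $\texttt{Id}$ column and the $\overline B$-triggered column collapsing.
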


\begin{proof}

We build equivalence relations $R$ and $S$ in columns, in such a way that $\Spec_\Rightarrow(R,S)$ has no basis. To do so, let $B$ be a noncomputable co-c.e.\ introreducibile set with $0\notin B$ (Lachlan proved that such $B$ must be hyperimmune, see the addendum at the end of Jockusch~\cite{Jockusch:68}). We define $R$ as the cylinder of the family $\set{W_{f(i)}}_{i\in \omega}$, where $f$ is  as in Lemma~\ref{lem:descend} and $W_{f(0)}=\overline{B}$, i.e.,
\[
\langle i, x\rangle R \langle j, y\rangle \Leftrightarrow i=j \wedge (x=y  \vee x,y \in W_{f(i)}).
\]

 We define $S$ as follows. We keep the $0$th column of $S$ isomorphic to $\texttt{Id}$, and then we encode the cylinder of $\set{U_i}_{i\geq 0}$ into the remaining columns of $S$,
with the further condition of making its $i$th column isomorphic to $\texttt{Id$_1$}$ if we witness that $i$ enters in $\overline{B}$. More formally, $S$ is the following equivalence relation
\[
\langle i, x\rangle S  \langle j, y\rangle \Leftrightarrow \langle i, x \rangle =\langle j,y\rangle \vee  (i=j \wedge i>0 \wedge (x U_{i-1} y \vee i \in \overline{B})).
\]

The relations $R$ and $S$ defined in this way are obviously ceers.
Denote by $\dA$ the upward closure of  $\set{\deg(W_{f(i)}): i \in \omega}$.
We claim that $\Spec_{\Rightarrow}(R,S)=\dA$.

\smallskip

On the one hand, let  $\mathbf{d} \in \dA$ and let $n$ be the least number such that $\mathbf{d}\geq \deg(W_{f(n)})$. By construction, any column of $S$ is either finite dimensional or isomorphic to $\texttt{Id$_1$}$. Moreover, since $B$ is infinite, there are infinitely many columns of $S$ that we never collapse to $\texttt{Id$_1$}$. Let $k$ be the least number such that the $k$th column of $S$ is $m$-dimensional with $m>n$. Denote by $C$ the cylinder of the family  ${W_{f(0)},\ldots,W_{f(n)}}$. Since  $C$ is $n+1$-dimensional, there must be a function $r$ that computably reduces $C$ to $U_m$. Indeed, by  Proposition~\ref{prop:univ-crea}, the latter is universal with respect to all  ceers of dimension $\leq m$.  Consider now the following function

\[
p(\langle i,x\rangle)= \begin{cases}
\langle k, r(\langle i, x \rangle)\rangle &\mbox{ if $i <n$}, \\
\langle 0, \langle i, 0\rangle \rangle &\mbox{ if $i \geq n$ and $x \in W_{f(i)}$}, \\
\langle 0,\langle i, x+1\rangle \rangle &\mbox{ if $i \geq n$ and $x \notin W_{f(i)}$}.
\end{cases}
\]

We want to show that $R\leq_{\mathbf{d}}S$ via $p$. First, notice that $p$ is $\mathbf{d}$-computable. Indeed, the uniformity of Sacks' Splitting guarantees that, for all $m$, $W_{f(m+1)}$ is uniformly reducible to $W_{f(m)}$. Hence,  $\mathbf{d}$ can compute any $W_{f(i)}$ with $i\geq n$.
Moreover, it is not difficult to see that $p$ reduces $R$ into $S$, by mapping the first $n+1$ columns of $R$ into the $k$th column of $S$ and all remaining columns of $R$ into singletons of the $0$th column of $S$ (the latter being isomorphic to \texttt{Id}).
This proves that $\dA \subseteq \Spec_{\Rightarrow}(R,S)$.

\smallskip
For the other inclusion, assume that $R\leq_{\mathbf{d}} S$ via some $h$. We distinguish three cases:

\begin{enumerate}
\item $h$ maps a noncomputable equivalence class of $R$ into a singleton of $S$, i.e., there exists $z$ such that, for some $k$, $h^{-1}(z)=\set{\langle k, y \rangle: y \in W_{f(k)}}$. If so, we obtain that $h$ computes $W_{f(k)}$, i.e., $\mathbf{d} \geq \deg(W_{f(k)})$;
\item $h$ maps a noncomputable equivalence class of $R$ into a collapsed column of $S$, i.e., there exists $k$ such that, for some $b \in \overline{B}$, $\set{\langle k, y \rangle: y \in W_{f(k)}}\subseteq \set{\langle b, x \rangle : x \in \omega } $. If so, since the $b$th column of $S$ is isomorphic to $\texttt{Id$_1$}$, we obtain again that $h$ computes $W_{f(k)}$, i.e., $\mathbf{d} \geq \deg(W_{f(k)})$;
\item  $h$ maps all noncomputable equivalence classes of $R$ into noncomputable equivalence classes of $S$. If so, we claim that $h$ enumerates an infinite subset of $B$: choose in a c.e.\ way a witness from each noncomputable equivalence class of $R$ and then list the indices of the column into which $h$ map such witnesses. More formally,  let $(y_i)_{i \in \omega}$ be an infinite c.e.\ sequence of numbers such that, for all $i$, $y_i\in W_{f(i)}$, and let $Y=\set {\langle k, y_k\rangle: k \in \omega}$. Notice that the set $Y^*=\set{j : (\exists x)(\langle j, x\rangle \in h[Y])}$ must be infinite. Otherwise, since each column of $S$ is finitely dimensional, $h$ would map some noncomputable equivalence class of $R$ into a singleton of $S$, and therefore we would be in Case $(1)$. Moreover, $Y^*\subseteq B$. Otherwise, $h$ would map some noncomputable equivalence class of $R$ into a collapsed column of $S$, and therefore we would be in Case $(2)$. Thus, $Y^*$ is an infinite subset of $B$ which is c.e.\ in $h$.  Since $B$ is introreducible, by Lemma~\ref{lem:initial} we obtain  $h\geq_T B$. This proves that    $\mathbf{d} \geq \deg(B)=\deg(W_{f(0)})$.
\end{enumerate}

In all the three cases $\mathbf{d}\in \set{\deg(W_{f(i)}): i \in \omega}$. Therefore,  $ \Spec_{\Rightarrow}(R,S) \subseteq$~$\dA$.

\smallskip

Hence, we proved that $\Spec_\Rightarrow(R,S)=\dA$. To conclude that   $\Spec_\Rightarrow(R,S)$ has no basis is enough to recall that  $\deg(W_{f(0)})>\deg(W_{f(1)})>\ldots$ is an infinite descending sequence of c.e.\ degrees not containing $\mathbf{0}$.
\end{proof}

\section{Bi-reducibility spectra}

We now turn our attention to bi-reducibility spectra.
By definition, any bi-reducibility spectrum is the intersection of two reducibility spectra. Indeed, for any pair of equivalence relations $(R,S)$, the following holds
\[
\Spec_{\Leftrightarrow}(R, S)=\Spec_{\Rightarrow}(R,S) \cap \Spec_{\Rightarrow}(S, R).
\]

It follows immediately that any bi-reducibility spectrum of equivalence relations with infinitely many equivalence classes is upward closed.
It is not difficult to see that all Turing degrees are degrees of bi-reducibility. But in fact, much more is true. In this section we obtain a natural companion of Theorem~\ref{thm:characterization} for bi-reducibility spectra, by proving that the latter realise any upward closed collection of Turing degrees with a countable basis. Moreover, we show that the result still holds if we limit our attention to equivalence relations with no infinite equivalence classes.

Bi-reducibility spectra are harder to deal with than reducibility spectra. This is because, while encoding or forbidding a given reduction, one has also to control backwards reductions.
This explains why the next proof is more delicate than that of Theorem~\ref{thm:characterization}.

\begin{thm}\label{thm:bi-spectra}
Let $\dA$ be an upward closed collection of Turing degrees  with countable basis $\dB$. There is a pair of  equivalence relations  $(R,S)$ with no infinite equivalence classes such that $\Spec_{\Leftrightarrow}(R,S)=\dA$.
\end{thm}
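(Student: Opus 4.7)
The plan is to adapt the construction of Theorem~\ref{thm:characterization}, replacing the infinite equivalence class of the relation $R_{\overline{X}}$ by a bounded (pair-based) structure, and choosing the pair pattern so that bi-reducibility between $R$ and $S$ is also controlled by the $B_i$'s. As in the proof of Theorem~\ref{thm:characterization}, fix, for each $\mathbf{b}_i \in \dB$, an introreducible set $B_i \in \mathbf{b}_i$ and let $X = \{\langle p_{B_0}(i), p_{B_i}(n)\rangle : i, n \in \omega\}$. Inside each column of $X$ (indexed by $i$), pair consecutive elements $\langle p_{B_0}(i), p_{B_i}(2k)\rangle$ and $\langle p_{B_0}(i), p_{B_i}(2k+1)\rangle$; elements of $\overline{X}$ are singletons. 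I will define $R$ and $S$ as two such relations whose pair patterns are slightly shifted with respect to each other (say $R$ uses pairs indexed by $(2k, 2k+1)$ while $S$ uses $(2k+1, 2k+2)$), so that the identity is not already a reduction in either direction but a simple $B_i$-computable shift along column $i$ is. This yields equivalence relations with no infinite equivalence classes, as required.

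For the upward inclusion $\dA \subseteq \Spec_\Leftrightarrow(R, S)$, I will construct, for each $i$, $\mathbf{b}_i$-computable reductions $f_i\colon R \to S$ and $g_i\colon S \to R$. Mimicking the role of $h_j$ in Theorem~\ref{thm:characterization}, each such reduction hard-codes the constant $p_{B_0}(i)$ (so $B_0$ is not needed as an oracle) and uses $B_i$ to enumerate the pairs of column $i$; the function then maps classes to classes by the appropriate shift along column $i$, and sends everything outside that column to a uniform ``safe'' sequence of singletons inside $\overline{X}$. Upward closure of $\Spec_\Leftrightarrow(R,S)$ (inherited from the two reducibility spectra it intersects) then finishes this direction.

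For the downward inclusion $\Spec_\Leftrightarrow(R, S) \subseteq \dA$, take $\mathbf{d} \in \Spec_\Leftrightarrow(R, S)$ with witnessing reduction $h$ (in either direction) and run the same case distinction as in the proof of Theorem~\ref{thm:characterization}. If $h$ enumerates infinitely many distinct classes inside a single column $k$ of $X$, then one extracts an infinite subset of $B_k$ which is c.e.\ in $h$, and Lemma~\ref{lem:initial} together with introreducibility of $B_k$ gives $\mathbf{d} \geq \mathbf{b}_k$. If $h$ instead picks from infinitely many distinct columns, the same argument applied to $B_0$ via the first coordinates $p_{B_0}(i)$ that appear in $\range(h)$ gives $\mathbf{d} \geq \mathbf{b}_0$. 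The ``residual'' case, in which $h$ lives almost entirely inside $\overline{X}$, is ruled out because $R$ and $S$ each contain infinitely many pair-classes that have to be reduced somewhere, forcing $\range(h)$ to meet $X$ infinitely often.

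The main obstacle, as the excerpt already notes, is that when controlling bi-reducibility one must simultaneously handle both $R \to S$ and its reverse. In Theorem~\ref{thm:characterization} the target of $h_j$ is essentially ``unstructured'', whereas here both $R$ and $S$ carry pairing structure, and a naive reverse reduction $S \to R$ seems to require detecting every $S$-pair, and hence computing all the $B_i$'s at once. The key design choice that avoids this is the slight shift between the pair patterns of $R$ and $S$: it lets a $\mathbf{b}_i$-computable reduction resolve pairs column-by-column within column $i$ alone, while pushing every other column into singleton classes of the target, without ever having to decode the pair structure of those foreign columns.
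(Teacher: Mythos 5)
Your construction has a genuine gap in the upward inclusion $\dA \subseteq \Spec_\Leftrightarrow(R,S)$. Your claimed $\mathbf{b}_i$-computable reduction $f_i \colon R \to S$ ``sends everything outside column $i$ to a uniform safe sequence of singletons inside $\overline{X}$.'' But the elements outside column $i$ include the $R$-pairs living in all the other columns $j \neq i$, and those are not singletons of $R$. If $\{a,b\}$ is an $R$-pair in column $j$, a reduction must arrange $f_i(a)\, S\, f_i(b)$; sending $a$ and $b$ to two \emph{distinct} singletons of $S$ (in $\overline{X}$ or anywhere else) makes $\neg\bigl(f_i(a)\, S\, f_i(b)\bigr)$ and destroys the reduction. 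To collapse such a pair to a single point you must first recognize it as a pair, i.e.\ decode the pairing in column $j$ --- and that needs the oracle $B_j$, which $\mathbf{b}_i$ does not give you. So the map $f_i$ you describe simply does not exist, and the very obstacle you flag at the end of your proposal (``a naive reverse reduction seems to require computing all the $B_i$'s at once'') is not avoided by the shift trick: it is equally present in the forward reduction, because in the bi-reducibility setting the \emph{source} $R$ already carries pairing in every column.

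Your treatment of the residual case is also shaky for a related reason: a reduction $h\colon R \to S$ can legitimately push an $R$-pair $\{a,b\}$ onto a single point $h(a)=h(b)\in\overline{X}$, so $\range(h)$ need not meet $X$ at all; the argument that ``infinitely many pair-classes force $\range(h)$ to meet $X$ infinitely often'' does not hold. (That particular hole can be patched --- a reduction whose range avoids $X$ decides $R$, hence enumerates $B_0$ via first coordinates, and Lemma~\ref{lem:initial} kicks in --- but the upward-inclusion failure is fatal.) The paper sidesteps the whole problem by abandoning 2-bounded pairs entirely: it builds nested finite families $\mathcal{C}_n$ with $\mathcal{C}_{n+1} = \{f_i[X]\cup\{\ulcorner X\urcorner\} : X\in\mathcal{C}_n,\ i\in\omega\}$, lets $R$ be generated by even levels and $S$ by odd levels, so that non-singleton $R$-classes all have even size and non-singleton $S$-classes all have odd size. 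Then a \emph{single} $B_i$ suffices to push every $\mathcal{C}_{2k}$-class one level up to a $\mathcal{C}_{2k+1}$-class via $f_i$ uniformly (no foreign-column pairs to collapse), and for the converse inclusion the size parity forces any eventually injective reduction to climb levels, which in turn forces its range to enumerate infinitely much of some $B_j$, $B_0$, or $B_1$. That level/size bookkeeping is the key idea your proposal is missing.
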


\begin{proof}
We prove the theorem for the case in which the basis $\dB=\set{\mathbf{b}_0,\mathbf{b}_1,\ldots}$ is infinite, the finite case being a simpler variation of the following argument. For any $\mathbf{b}_i\in \dB$, let $B_i\in \mathbf{b}_i$ be an  introreducible set such that $\set{0,1}\cap B_i=\emptyset$. Our strategy is to use $B_0$ and $B_1$ to encode the information provided by the other introreducible sets. In doing so, it is convenient to introduce some notation. First, similarly to the definition of $X$ in the proof of Theorem~\ref{thm:characterization}, denote by $f_i$ the following $\mathbf{b}_i$-computable function, for all $i$,
\[
f_i(x)=\langle p_{B_0}(i), p_{B_i}(x)\rangle.
\]

It is immediate to see that  the ranges of the functions so defined are pairwise disjoint.
Secondly, if $X$ is a finite set  with canonical index $z$ (i.e., $X=D_z$), denote $\langle 0, p_{B_1}(z)\rangle$ by $\ulcorner X \urcorner$. In what follows, we will use the following  observation several times: for any finite set $X$, $\ulcorner X \urcorner \notin (\bigcup_{k \in \omega} \range(f_k))$.

\smallskip

To define the desired pair of equivalence relations $(R,S)$, we start by considering the following sequence of families of finite sets
\[
\mathcal{C}_0=\set{\set{\langle 0,0\rangle,\langle 0,1\rangle}},
\]
\[
\mathcal{C}_{n+1}=\set{ f_i[X] \cup \set{\ulcorner X \urcorner}: X \in \mathcal{C}_n \wedge i\in\omega},
\]

\medskip

Let $\mathcal{C}=\bigcup_{k\in\omega}\mathcal{C}_{k}$.

\begin{claim}
$\mathcal{C}$ satisfies the following properties.

\begin{enumerate}
\item If $X \in \mathcal{C}_{n}$, then $|X|=n+1$.
\item If $X \in \mathcal{C}_n$ and $Y \in \mathcal{C}_m$, then  $X \cap Y=\emptyset$.
\end{enumerate}
\end{claim}

\begin{proof}
$(1)$ By induction we prove that, for all $n$, any two elements of $\mathcal{C}_n$ have the same size. This is trivially true for $\C_0$. Towards a contradiction, let $n$ be the least number such that there exists $\set{X,Y} \subseteq \C_n$ with $|X|\neq |Y|$. By construction, there must be $\set{X_0,Y_0}\subseteq \C_{n-1}$ such that $X=\set{f_i[X_0] \cup \set{\ulcorner X_0 \urcorner}}$, for some $f_i$, and $Y =\set{f_j[Y_0] \cup \set{\ulcorner Y_0 \urcorner}} $, for some $f_j$. Since $f_i$ and $f_j$ are both injective and  $n$ is chosen to be minimal, we have that $|f_i[X_0]|=|f_j[Y_0]|$. It follows that $|X_0|\leq |X| \leq |X_0|+1$, and $|X|<|X_0|+1$ can hold only if there is $z\in X_0$ such that $\ulcorner X_0 \urcorner= f_i(z)$. But the latter equality is impossible, since $\ulcorner X_0 \urcorner\notin (\bigcup_{k\in\omega}\range(f_k))$. Therefore, we have $|X|=|X_0|+1$. By  reasoning in a similar way, it can be shown that $|Y|=|Y_0|+1$. So, any two elements of $\C_{n}$ have the same size, and in fact they all have size $|A|+1$, for all $A \in \C_{n-1}$.

\smallskip

$(2)$
 Towards a contradiction, assume that $(n,m)$ is the least  pair for which there exist  $X\in \C_n$ and $Y \in \C_m$ such that $z \in X \cap Y$, for some $z$.
 Indeed, from the fact that $\set{0,1}\cap B_i=\emptyset$ for all $i$, we obtain that for any finite $X$ the following holds
\[
\set{\langle 0,0\rangle,\langle 0,1\rangle} \cap (\bigcup_{k\in\omega}\range(f_k) \cup \ulcorner X\urcorner) =\emptyset.
\]

Thus, $0\notin \set{n,m}$. By construction, we have that there is a unique pair of functions $(f_i,f_j)$ such that  $X=f_i[X_0]\cup\set{\ulcorner X_0\urcorner}$ and $Y=f_j[Y_0]\cup\set{\ulcorner Y_0\urcorner}$, with $X_0 \in \C_{n-1}$ and $Y_0 \in \C_{m-1}$. We have that $z\notin \set{\ulcorner X_0\urcorner, \ulcorner Y_0\urcorner}$. This is because

\begin{enumerate}
\item  $\ulcorner X_0\urcorner \neq \ulcorner Y_0\urcorner$,
\item and  $\set{\ulcorner X_0\urcorner, \ulcorner Y_0\urcorner} \cap (\bigcup_{k\in\omega} \range(f_k))=\emptyset$.
\end{enumerate}
\smallskip

Therefore, it must be $z\in \set{f_i[X_0]\cap f_j[Y_0]}$. If $i \neq j$ we immediately obtain a contradiction, since we know that $\range(f_i)\cap \range(f_j)=\emptyset$. Hence, $f_i=f_j$ and $f_i^{-1}(z)=f_j^{-1}(z)$  must be in $ X_0\cap Y_0$. But this would imply that $\C_{n-1}$ and $C_{m-1}$ overlap, contradicting the minimality of the pair $(n,m)$.
\end{proof}

Let $R$ and $S$ be the equivalence relations generated respectively by $\bigcup_{k\in\omega}\mathcal{C}_{2k}$ and $\bigcup_{k\in\omega}\mathcal{C}_{2k+1}$, i.e.,
\[
xR y \Leftrightarrow x=y \vee  (\exists Z \in \bigcup_{k\in\omega}\mathcal{C}_{2k})(x,y \in Z)
\]

and
\[
xS y \Leftrightarrow x=y \vee  (\exists Z \in \bigcup_{k\in\omega}\mathcal{C}_{2k+1})(x,y \in Z).
\]

Item $(2)$ of the last claim ensures that the two equivalence relations are well-defined, by guaranteeing that all equivalence classes of $R$ and $S$ are pairwise disjoint.
Moreover, as a consequence of item $(1)$ of the last claim, we obtain that all equivalence classes of $R$ and $S$ are finite, but they have arbitrary large size: any equivalence class  of  $R$ is either a singleton or has even size; all equivalence class of $S$ have odd size.
We claim that $\Spec_{\Leftrightarrow}(R,S)=\dA$.

\begin{claim}\label{lem:twocones1}
$\dA\subseteq\Spec_{\Leftrightarrow}(R,S)$.
\end{claim}

\begin{proof}
Since any bi-reducibility spectrum is upward closed, it is enough to prove that $\dB\subseteq\Spec_{\Leftrightarrow}(R,S)$. Let $\mathbf{b}_i\in \dB$. We show that $R\leq_{\mathbf{b}_i}S$ via $f_i$. If $xR y$, with $x\neq y$, then there is
$Z \in \mathcal{C}_{2k}$, for some $k$, such that $x,y\in Z$. It follows that $\set{f_i[Z]\cup \set{\ulcorner Z \urcorner}}\in \mathcal{C}_{2k+1}$. Since $\bigcup_{i\in\omega}\mathcal{C}_{2i+1}$ generates $S$, we obtain that $\set{f_i[Z]\cup \set{\ulcorner Z \urcorner}}$ forms an equivalence class of $S$ which contains $f_i[Z]$, and in particular $f_i(x)$ and $f_i(y)$. Hence, $f_i(x)S f_i(y)$ holds.

On the other hand, assume $\neg(xRy)$ and, towards a contradiction, suppose that $f_i(x) S f_i(y)$. By construction of $S$, this implies that there is $Z \in \C_{2k}$, for some $k$, such that $\set{f_i(x),f_i(y)}\subseteq \set{f_i[Z] \cup \ulcorner Z\urcorner}$. Since $\ulcorner Z \urcorner\notin \range(f_i)$, it follows that $\set{f_i(x),f_i(y)}\subseteq f_i[Z]$, and therefore $\set{x,y}\subseteq Z$. By construction of $R$ this would imply $x R y$, contradicting our assumption.

\smallskip

We proved that $\dB\subseteq \Spec_\Rightarrow(R,S)$. By a similar argument, it can be shown that, for all $i$, $S \leq_{\mathbf{b}_i} R$ via $f_i$. Thus, $\dB\subseteq \Spec_\Rightarrow(S,R)$. Since $\Spec_\Leftrightarrow(R,S)$ coincides with $\Spec_\Rightarrow(R,S)\cap \Spec_\Rightarrow(S,R)$, we conclude that $\dA\subseteq\Spec_{\Leftrightarrow}(R,S)$.
\end{proof}

It remains to show that $\Spec_{\Leftrightarrow}{(R,S)} \subseteq \dA$. We say that a total function $f$ is \emph{eventually injective} if there is $n$ such that $f$ restricted to $x>n$ is injective. Let $\mathbf{d}\in \Spec_{\Leftrightarrow}{(R,S)}$. Assume that $R\leq_{\mathbf{d}} S$ via some function $s$ and $S \leq_{\mathbf{d}} R$  via some function $t$.

\begin{claim}\label{claim:inj}
There exists an infinite set $A$, computable in $\mathbf{d}$, such that if $z \in A$ then $|[z]_R|>1$.
\end{claim}

\begin{proof}
We distinguish two cases. If $s$ (resp.\ $t$) is not eventually injective, let $A=\set{x_0, x_1,\ldots}$ be an infinite set such that, for all $k$, $s(x_{2k})=s(x_{2k+1})$ ($t(x_{2k})=t(x_{2k+1})$).
If $s$ and $t$ are both eventually injective,
 define the following sequence, for all $x$,
\[
x_0= x
\]
\[
x_{n+1}=\begin{cases}
s(x_n) &\text{ if $n$ is even,} \\

t(x_n)&\text{ if $n$ is odd,}
\end{cases}
\]

and the following function
\[
h_{x}(n)=\begin{cases}
|[x_n]_R| &\text{ if $n$ is even,} \\

|[x_n]_S| &\text{ if $n$ is odd.}
\end{cases}
\]

We claim that there exists $x$ and $m$ such that $h_x$ restricted to $y>m$ is strictly increasing. Otherwise, $s$ would map infinitely many equivalence classes of $R$ into classes of smaller size of $S$; or, vice versa, $t$ would map infinitely many equivalence classes of $S$ into classes of smaller size of $R$. In both cases, this contradicts the assumption that $s$ and $t$ are eventually injective.

Thus, let $z$ and $k$ be such that $h_z$ restricted to $y>2k$ is strictly increasing and $h_z(2k)>1$. We have that $A=\set{z_{2k}: k \in \omega}$ is a partial transversal of $R$ and each element of $A$ is in a class of size larger than $1$.
\end{proof}

From the fact that $A$ intersects no singleton of $R$, it follows that  any element of $A$ is either of the form $\langle 0, p_{B_1}(k)\rangle$, for some $k$, or the form $\langle p_{B_0}(i),p_{B_i}(y)\rangle$, for some $i$ and $y$: indeed, a number which is not in any of these forms is necessarily a singleton in $R$.
We distinguish three cases.

\begin{enumerate}
\item The set $Y=\set{p_{B_1}(k):\langle 0, p_{B_1}(k)\rangle \in X}$ is infinite: If so, we can reason in a familiar way. $Y\subseteq B_1$ is c.e.\ in $s$. By Lemma~\ref{lem:initial}, we obtain
that $s$ computes $B_1$, and therefore $\mathbf{
d} \geq \mathbf{b}_1$;
\item There is $j$ such that the set $Y_j=\set{p_{B_j}(k):\langle p_{B_0}(j), p_{B_j}(k)\rangle \in X}$ is infinite: If so, $Y_j\subseteq B_j$ is c.e.\ in $s$. By Lemma~\ref{lem:initial}, we obtain that $s$ computes $B_j$, and therefore $\mathbf{
d} \geq \mathbf{b}_j$;
\item The set $Y^*=\set{p_{B_0}(k):(\exists z)(\langle p_{B_0}(k), p_{B_j}(z)\rangle \in X)}$ is infinite: If so, $Y^*\subseteq B_0$ is c.e.\ in $s$. By Lemma~\ref{lem:initial}, we obtain that $s$ computes $B_0$, and therefore $\mathbf{d}\geq \mathbf{b}_0$.
\end{enumerate}

Therefore, $\mathbf{d}\in \set{\mathbf{c}: \mathbf{c}\geq \mathbf{b_0} \vee \mathbf{c}\geq \mathbf{b_1} \vee \mathbf{c}\geq \mathbf{b_j}}$, which means that $\mathbf{d}\in \dA$ and $\Spec_{\Leftrightarrow}(R,S)\subseteq \dA$. By recalling Claim~\ref{lem:twocones1}, we conclude that $\Spec_{\Leftrightarrow}(R,S)= \dA$.
\end{proof}

\medskip

\end{document}